\documentclass[reqno, 11pt, oneside]{amsart}
\usepackage{latexsym, amsmath,amssymb}
\usepackage{graphicx}
\usepackage{xcolor}
\usepackage{enumitem}
\usepackage[utf8]{inputenc}

 \numberwithin{equation}{section}

\usepackage{amsmath}

\def\XXint#1#2#3{{\setbox0=\hbox{$#1{#2#3}{%
\int}$ }
\vcenter{\hbox{$#2#3$ }}\kern-.6\wd0}}

\renewcommand{\epsilon}{\varepsilon}
\newtheorem{theorem}{Theorem}

\newtheorem{lemma}[theorem]{Lemma}
\newtheorem{corollary}[theorem]{Corollary}

\newtheorem{proposition}[theorem]{Proposition}

\newtheorem{remark}[theorem]{Remark}
\newcommand{\bth}{\begin{theorem}}
\newcommand{\ble}{\begin{lemma}}
\newcommand{\bcor}{\begin{corr}}

\newcommand{\bdeff}{\begin{deff}}

\newcommand{\bprop}{\begin{proposition}}
\newcommand{\ele}{\end{lemma}}
\newcommand{\ecor}{\end{corr}}
\newcommand{\edeff}{\end{deff}}

\numberwithin{theorem}{section}

\newcommand{\eprop}{\end{proposition}}

\renewcommand{\l}{\lambda}

\renewcommand{\Pi}{\varPi}

\renewcommand{\epsilon}{\varepsilon}

\newcommand{\R}{{\mathbb R}}

\newcommand{\norm}[1]{\left\|#1\right\|}

\usepackage{esint}

\usepackage{comment}

\usepackage[
colorlinks=true,
linkcolor=blue,
 citecolor=blue,
  urlcolor=blue,
     pagebackref,
]{hyperref}

\allowdisplaybreaks[3]

\def\vint_#1{\mathchoice%
        {\mathop{\kern 0.2em\vrule width 0.6em height 0.69678ex depth -0.58065ex
                \kern -0.8em \intop}\nolimits_{\kern -0.4em#1}}%
        {\mathop{\kern 0.1em\vrule width 0.5em height 0.69678ex depth -0.60387ex
                \kern -0.6em \intop}\nolimits_{#1}}%
        {\mathop{\kern 0.1em\vrule width 0.5em height 0.69678ex
            depth -0.60387ex
                \kern -0.6em \intop}\nolimits_{#1}}%
        {\mathop{\kern 0.1em\vrule width 0.5em height 0.69678ex depth -0.60387ex
                \kern -0.6em \intop}\nolimits_{#1}}}
\def\vintslides_#1{\mathchoice%
        {\mathop{\kern 0.1em\vrule width 0.5em height 0.697ex depth -0.581ex
                \kern -0.6em \intop}\nolimits_{\kern -0.4em#1}}%
        {\mathop{\kern 0.1em\vrule width 0.3em height 0.697ex depth -0.604ex
                \kern -0.4em \intop}\nolimits_{#1}}%
        {\mathop{\kern 0.1em\vrule width 0.3em height 0.697ex depth -0.604ex
                \kern -0.4em \intop}\nolimits_{#1}}%
        {\mathop{\kern 0.1em\vrule width 0.3em height 0.697ex depth -0.604ex
                \kern -0.4em \intop}\nolimits_{#1}}}

\newcommand{\aveint}[2]{\mathchoice%
        {\mathop{\kern 0.2em\vrule width 0.6em height 0.69678ex depth -0.58065ex
                \kern -0.8em \intop}\nolimits_{\kern -0.45em#1}^{#2}}%
        {\mathop{\kern 0.1em\vrule width 0.5em height 0.69678ex depth -0.60387ex
                \kern -0.6em \intop}\nolimits_{#1}^{#2}}%
        {\mathop{\kern 0.1em\vrule width 0.5em height 0.69678ex depth -0.60387ex
                \kern -0.6em \intop}\nolimits_{#1}^{#2}}%
        {\mathop{\kern 0.1em\vrule width 0.5em height 0.69678ex depth -0.60387ex
                \kern -0.6em \intop}\nolimits_{#1}^{#2}}}

\def\XXint#1#2#3{{\setbox0=\hbox{$#1{#2#3}{\int}$}
    \vcenter{\hbox{$#2#3$}}\kern-.5\wd0}}

\newcommand{\vertiii}[1]{{\left\vert\kern-0.25ex\left\vert\kern-0.25ex\left\vert #1 
    \right\vert\kern-0.25ex\right\vert\kern-0.25ex\right\vert}}

\newcommand{\vertii}[1]{{\left\vert\kern-0.25ex\left\vert\kern-0.25ex  #1 
    \kern-0.25ex\right\vert\kern-0.25ex\right\vert}}

\makeatletter
\@namedef{subjclassname@2020}{\textup{2020} Mathematics Subject Classification}
\makeatother

\usepackage{mathabx}
\usepackage{geometry}

\begin{document}

\title[Asymptotically Sharp Embedding of $A_\infty$ into $A_p$]
{Asymptotically sharp embedding of $A_\infty$ into $A_p$ for flat weights and applications to Poincar\'e--Sobolev inequalities}

\author[A. Claros]{Alejandro Claros}

\address[A. Claros]{BCAM -- Basque Center for Applied Mathematics, Bilbao, Spain\newline
Universidad del Pa\'is Vasco / Euskal Herriko Unibertsitatea (UPV/EHU), Bilbao, Spain}

\email{aclaros@bcamath.org, aclaros003@ikasle.ehu.eus}

\author[E. Rela]{Ezequiel Rela}
\address[E. Rela]{Universidad Torcuato Di Tella. Departamento de Matemáticas y Estadística and CONICET}
\email{ezequiel.rela@utdt.edu}

\thanks{A. Claros is supported by the Basque Government through the BERC 2022-2025 program, by the Ministry of Science and Innovation through Grant PRE2021-099091 funded by BCAM Severo Ochoa accreditation CEX2021-001142-S/MICIN/AEI/10.13039/501100011033 and by ESF+, and by the project PID2023-146646NB-I00 funded by MICIU/AEI/10.13039/501100011033 and by ESF+. 
}

\subjclass[2020]{Primary 42B35; Secondary 42B25, 46E35, 26D10}



\keywords{Muckenhoupt weights, $A_{\infty}$ weights, BMO space, Poincar\'e--Sobolev inequalities}

\begin{abstract}
We provide new quantitative results on the embedding of the Muckenhoupt class $A_\infty$ into $A_p$ with the correct asymptotic behavior when the Fujii--Wilson constant $[w]_{A_\infty}$ is close to 1, namely that the parameter $p$ goes to 1 when the weight is nearly constant. As intermediate steps towards the result, we obtain quantitative estimates on the weighted and unweighted BMO norms of $\log w$ for an $A_\infty$ weight $w$. As a consequence, we show that a precise quantitative weighted Poincar\'e--Sobolev inequality can be proved for weights with small $[w]_{A_\infty}$ that recovers the classical Sobolev exponent $p^*=\frac{np}{n-p}$ when $[w]_{A_\infty}\to 1^+$. 
\end{abstract}

\maketitle

\section{Introduction and Main Results}

The purpose of this article is to provide new, improved quantitative estimates for the embedding of $A_\infty$ weights into $A_p$ classes.  Throughout, by a weight we mean a locally integrable function $w$ such that $0<w(x)<\infty$ for almost every $x\in\mathbb{R}^n$.  For $1< p<\infty$, the Muckenhoupt $A_p$ classes of weights are defined by the condition 
$$[w]_{A_p}:=\sup _Q\left(\frac{1}{|Q|} \int_Q w(y) \, d y\right)\left(\frac{1}{|Q|} \int_Q w(y)^{1-p^{\prime}} \, d y\right)^{p-1}<\infty,
$$
where the supremum is taken over all the cubes  $Q$  in $\mathbb{R}^n$ with sides parallel to the coordinate axes. The relevance of this class of weights comes from the very well-known fact that they characterize the weighted $L^p$ boundedness of maximal and singular operators like the Hardy--Littlewood maximal operator and Calder\'on--Zygmund operators. It is worth mentioning that when deriving the $A_p$ condition above as a necessary condition for the boundedness of the H-L maximal function $M$, it is immediate to verify that any meaningful $A_p$ weight $w$ should be \emph{positive} almost everywhere and locally integrable (see, for example, classical references such as \cite{Duo01,GCRdF,GrafakosMF}).
The limiting case $p=1$ defines the class $A_1$; that is, the set of weights $w$ such that
$$
[w]_{A_1}:=\sup_Q\left(\frac{1}{|Q|} \int_Q  w(y)\, dy\right) \underset{Q}{\operatorname{ess} \sup }\left(w^{-1}\right)<+\infty.
$$
At the other endpoint we have the $A_\infty$ class defined as the union of $A_p$ classes, namely $A_\infty:=\bigcup_{p\ge1} A_p.$
The membership in $A_\infty$ can also be characterized in terms of the finiteness of the so-called $A_\infty$ constant defined as 
$$
[w]'_{A_{\infty}}:=\sup _Q\left(\frac{1}{|Q|} \int_Q w\right) \exp \left(\frac{1}{|Q|} \int_Q \log w^{-1}\right).
$$
This $A_\infty$ constant was originally introduced by Hru\v{s}\v{c}ev \cite{Hruscev} (see also \cite{GCRdF}). In recent years, a new definition was introduced that is more adequate to express quantitative norm inequalities for maximal and singular operators, namely the one defined as
\begin{equation*}
  [w]_{A_\infty}:=\sup_Q\frac{1}{w(Q)}\int_Q M(w\chi_Q)(x)\, dx,
\end{equation*}
where $M$ stands for the Hardy--Littlewood maximal function,
$$
M f(x)=\sup _{Q \ni x} \frac{1}{|Q|} \int_Q |f(y)|\,  dy.
$$
Note that the weights under consideration are positive a.e., so the quotient in the definition of $[w]_{A_\infty}$ is well defined. Moreover, since $M(w\chi_Q)(x)\ge w_Q$ for every $x\in Q$, we have $[w]_{A_\infty}\ge 1$.  This is sometimes called the ``Wilson constant'', as it was implicitly introduced by Wilson a long time ago (see \cite{Wilson:87, Wilson:89, Wilson-LNM}) with a different notation,  and was also implicitly introduced by Fujii \cite{Fujii}. Its explicit formulation appears in \cite{HP}. This constant  is more relevant since it is known that $ [w]'_{A_\infty}<\infty \iff [w]_{A_\infty}<\infty$, but there are examples of weights $w \in A_{\infty}$ so that   $[w]_{A_\infty} $ is much smaller than
$[w]'_{A_\infty}$ (interesting examples can be found in \cite{HP},  where it was shown that $[w]'_{A_\infty}$ can be exponentially larger than $[w]_{A_\infty}$ ).

An easy result is that the $A_p$ classes are increasing in $p$ and \emph{open}, in the sense that if $w\in A_p$ with $p>1$ then there exists an $\varepsilon>0$ such that $w\in A_{p-\varepsilon}$, with an explicit control of the $A_{p-\varepsilon}$ constant $[w]_{A_{p-\varepsilon}}$  in terms of $[w]_{A_p}$. The relevant question here is to determine, given the weight $w\in A_\infty$ with a certain constant $[w]_{A_\infty}$, what is the best possible $p\ge 1$ such that $w\in A_p$. Moreover, it is interesting to find a quantitative connection between this optimal $p\ge 1$ and the constant $[w]_{A_\infty}$.

Work in this direction can be found in \cite{HagPar-Embeddings}, where, as a consequence of their study of quantitative weighted Solyanik estimates,  the authors prove that if $w$ is an $A_{\infty}$ weight in $\mathbb{R}^{n}$, then $w \in A_p$ for $p> e^{c_n[w]_{A_{\infty}}}$ with $[w]_{A_p} \leq e^{e^{c_n[w]_{A_{\infty}}}}$, where $c_n>1$ is a dimensional constant. This seems to be the best available result in the literature on this question, but a simple observation is that both these estimates fall short of what they should be in the particular case of weights with $[w]_{A_\infty}\to 1$. Notice that since $[w]_{A_\infty}=1$ is equivalent to $w$ being constant almost everywhere (see for instance \cite[Proposition 2.1]{ParissisRela}), it makes sense to refer to this asymptotic analysis as the case of \emph{flat weights}. The main contribution in this article is to provide a sharp asymptotic estimate for the $A_\infty \hookrightarrow A_p$ embedding for flat weights. That is, denoting by $p([w]_{A_\infty})$ the index such that $w\in A_{p([w]_{A_\infty})}$, we show that $p([w]_{A_\infty})\to 1$ when $[w]_{A_\infty}\to 1$, with explicit quantitative estimates for the $A_p$ constant. The remarkable aspect is that we indeed need to restrict ourselves to weights with small $A_\infty$ constants, a condition that seems to be common in results of this type. We do not know whether similar estimates can be proved for \emph{all} $A_\infty$ weights. 

 We now state the main result of the paper.

\begin{theorem}\label{thm:Ainfty Ap}
	There exist dimensional constants $c_n, \tau_n, \tau_n'>0$ such that if $w\in A_\infty$ with $[w]_{A_\infty} \le 1+ c_n$ then 
	\begin{equation*}
		w\in A_p \qquad \text{with} \qquad p=1+\tau _n ([w]_{A_\infty} -1), 
	\end{equation*}
	and moreover,
	\begin{equation*}
		[w]_{A_p}\le \exp (\tau_n'  ([w]_{A_\infty} -1) ) .
	\end{equation*}
\end{theorem}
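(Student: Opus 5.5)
The plan is to go through the BMO norm of $\log w$, following the route suggested in the abstract. Write $\delta=[w]_{A_\infty}-1$ and $\varphi=\log w$.

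\textbf{Step 1: sharp reverse Hölder inequality.} I will use the sharp reverse Hölder inequality for $A_\infty$ weights in the form of Hytönen--Pérez. For $0<r-1\le \frac{1}{c_n'[w]_{A_\infty}}$ one has $\bigl(\frac{1}{|Q|}\int_Q w^r\bigr)^{1/r}\le 2\, w_Q$. This alone is too crude, because the constant $2$ does not tend to $1$ as $\delta\to 0$.

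\textbf{Step 2: a Solyanik-type estimate.} I therefore want a refinement in which the oscillation is small. The Fujii--Wilson condition gives
\begin{equation*}
\frac{1}{w(Q)}\int_Q \bigl(M(w\chi_Q)-w_Q\bigr)\,dx\le \delta.
\end{equation*}
I plan to combine this with a local Calderón--Zygmund decomposition at heights $\lambda>w_Q$. The goal is the distributional estimate
\begin{equation*}
w(\{x\in Q: w>\lambda\})\lesssim \frac{\delta\, w(Q)\, w_Q}{\lambda-w_Q},
\end{equation*}
together with an analogous bound for small values, obtained via $M$ below average. From these I want $\frac{1}{|Q|}\int_Q \frac{|w-w_Q|}{w_Q}\lesssim \delta$, up to a logarithmic factor that I must remove. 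The way to remove it is to iterate using the exponential self-improvement coming from reverse Hölder.

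\textbf{Step 3: BMO bound for $\log w$.} From this I aim to derive $\|\varphi\|_{BMO}\le C_n\delta$, in the sense that
\begin{equation*}
\frac{1}{|Q|}\int_Q |\varphi-\varphi_Q|\le C_n\delta
\end{equation*}
for all $Q$, valid for $\delta\le c_n$. The two cases are handled separately:
\begin{itemize}
\item On the set where $w\ge w_Q/2$, I use $|\log t|\lesssim |t-1|$.
\item The set where $w$ is much smaller than $w_Q$ is controlled by the distributional tail estimate. Concretely, $\frac{1}{|Q|}\int_Q M(w\chi_Q)-w_Q$ is at least a constant times $w_Q$ times the measure of a low set. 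This holds because $M(w\chi_Q)\ge w_Q$ everywhere while $w$ is small there, which forces $w(Q)$ to be carried by a smaller portion of $Q$.
\end{itemize}
I expect this step, getting an estimate \emph{linear} in $\delta$ for the BMO norm rather than $\sqrt{\delta}$ or $\delta\log(1/\delta)$, to be the main obstacle. If a direct argument fails, I would pass through the weighted BMO norm $\frac{1}{w(Q)}\int_Q|\varphi-\varphi_Q|\,w$ first. That quantity is more naturally comparable to the Fujii--Wilson functional, and I would then transfer it back to the unweighted norm using the reverse Hölder inequality of Step 1.

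\textbf{Step 4: John--Nirenberg and the $A_p$ constant.} Apply the John--Nirenberg inequality in its sharp form: if $\|\varphi\|_{BMO}=\beta$ is small, then $\frac{1}{|Q|}\int_Q e^{s|\varphi-\varphi_Q|}\le 1+C_n s\beta$ for $s\beta\le c_n$. The standard $e^{\pm}$ argument then gives
\begin{equation*}
[w]_{A_p}\le \Bigl(\frac{1}{|Q|}\int_Q e^{\varphi-\varphi_Q}\Bigr)\Bigl(\frac{1}{|Q|}\int_Q e^{-(\varphi-\varphi_Q)/(p-1)}\Bigr)^{p-1}.
\end{equation*}
Choose $p-1=\tau_n\delta$ with $\tau_n=C_n/c_n$, so that $s=\frac{1}{p-1}$ satisfies $s\beta\le c_n$. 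Then the second factor is at most $(1+C)^{p-1}\le e^{C\tau_n\delta}$, and the first is at most $1+C\delta$. Hence $[w]_{A_p}\le e^{\tau_n'\delta}$, which is the claimed bound.
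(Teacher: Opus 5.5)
\textbf{The gap is in Step 3}, and that step carries all the difficulty. Once $\|\log w\|_{\operatorname{BMO}}\le C_n\delta$ is known, your Step 4 is correct. Your sharp bound $\frac{1}{|Q|}\int_Q e^{s|\varphi-\varphi_Q|}\le 1+C_n s\beta$ even spares you the Jensen step with $r\ge 1$ that the paper uses. The target of Step 2 also needs no Calder\'on--Zygmund argument: since $M(w\chi_Q)\ge\max\{w,w_Q\}$ a.e.\ on $Q$, you get $\frac{1}{|Q|}\int_Q (w_Q-w)^+\le\delta w_Q$, hence $\frac{1}{|Q|}\int_Q|w-w_Q|\le 2\delta w_Q$ with no logarithmic loss.

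In Step 3, the part $\{w\ge w_Q/2\}$ is fine. On $E=\{w<w_Q/2\}$, however, you only obtain $|E|\le 2\delta|Q|$. This says nothing about $\frac{1}{|Q|}\int_E\log(w_Q/w)$, because $\log(w_Q/w)$ is unbounded on $E$. No refinement of a one-cube estimate can fix this. Take $Q=[0,1]$, $w=1$ on $[0,1-\eta]$ and $w=\epsilon$ on $(1-\eta,1]$. Then $\frac{1}{w(Q)}\int_Q M(w\chi_Q)\le\frac{1}{1-\eta}$ for every $0<\epsilon<1$, while $\frac{1}{|Q|}\int_Q|\log w-(\log w)_Q|=2\eta(1-\eta)\log(1/\epsilon)$. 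So the smallness of the Fujii--Wilson functional on all subcubes has to be fed through a multiscale, John--Nirenberg-type mechanism. Nothing in Steps 1--2 provides one: Step 2 concerns the $L^1$ oscillation of $w/w_Q$, which is blind to how close to $0$ the weight gets, and reverse H\"older only controls large values of $w$.

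\textbf{Your fallback breaks down for the same reason.} Passing through $\operatorname{BMO}_w$ is the right first move, and it is what the paper does. There the low set is weighted by $w$, so $\log t\le t-1$ makes it harmless. One gets $\|\log w\|_{\operatorname{BMO}_w}\le 8([w]_{A_\infty}^{\log}-1)\le 2^{n+3}\delta$, the last step by the reverse weak-type bound $\frac1t\int_{\{x\in Q:w>t\}}w\le 2^n|\{x\in Q:M(w\chi_Q)>t\}|$ for $t>w_Q$ together with the layer-cake formula. But reverse H\"older cannot bring you back to Lebesgue measure. It says that sets of small Lebesgue measure have small $w$-measure, i.e.\ it bounds $w$-averages by Lebesgue averages, and you need the opposite direction. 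The paper uses Tsutsui's theorem instead, in three steps:
\begin{itemize}
\item the John--Nirenberg inequality for the non-doubling measure $w\,dx$ (Mateu--Mattila--Nicolau--Orobitg, with dimensional constants since $w\,dx$ does not charge hyperplanes) gives $\|\log w-(\log w)_{Q,w}\|_{\exp L(Q,w)}\lesssim_n\|\log w\|_{\operatorname{BMO}_w}$;
\item the Hru\v{s}\v{c}ev inequality $\exp(\frac{1}{|Q|}\int_Q g)\le[w]'_{A_\infty}\frac{1}{w(Q)}\int_Q e^g w$ for $g\ge0$ transfers this to Lebesgue measure at the cost of a factor $\log(2[w]'_{A_\infty})$;
\item the bound $[w]'_{A_\infty}\le e^{e^{c_n[w]_{A_\infty}}}$ (Hagelstein--Parissis) makes that factor a dimensional constant when $[w]_{A_\infty}\le 2$.
\end{itemize}

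Alternatively, your two bullet observations do suffice if you aim at median oscillation instead of mean oscillation. Chebyshev and $|\log s|\le 2|s-1|$ for $s\ge\frac12$ give $|\{x\in Q:|\log w-\log w_Q|>8\delta/\lambda\}|\le\frac{\lambda}{2}|Q|$ for every cube whenever $8\delta\le\lambda$. Hence $\omega_\lambda(\log w;Q)=\inf_c((\log w-c)\chi_Q)^*(\lambda|Q|)\le 8\delta/\lambda$. The John--Str\"omberg theorem (e.g.\ via Lerner's local oscillation formula) then gives $\|f\|_{\operatorname{BMO}}\lesssim\sup_Q\omega_\lambda(f;Q)$ for $\lambda=2^{-n-2}$. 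That theorem is exactly the multiscale step your proposal is missing.
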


\begin{remark}
	We observe that under the smallness condition $[w]_{A_\infty} \le 1+c_n$ of the preceding theorem, the exponential upper bound can be controlled linearly. As a result, in the flat regime, the quantities $p$ and $[w]_{A_p}$ are comparable, yielding the relation
	\begin{equation*}
		p \simeq_n [w]_{A_p} \simeq_n 1+\tau_n ([w]_{A_\infty} -1).
	\end{equation*}
\end{remark}

A key intermediate step towards the proof of Theorem \ref{thm:Ainfty Ap} is an estimate on the BMO seminorm of  $\log w$, where  $w$ is an $A_\infty$ weight. The fact that $\log w$ belongs to BMO when $w\in A_\infty$ is not new; even some quantitative estimates are known. A nice result in this area related to the $[w]'_{A_\infty}$ constant was provided by Korey \cite{Korey}. Among other results, Korey proved that 
$$
\|\log w\|_{\operatorname{BMO}}\le \log\left(2[w]'_{A_\infty}\right).
$$
There is also an asymptotic estimate of the form
$$
\|\log w\|_{\operatorname{BMO}}\le C\sqrt{\log\left([w]'_{A_\infty}\right)}\qquad \text{as} \qquad [w]'_{A_\infty}\to 1.
$$
Results like these were later used by Pattakos and Volberg in \cite{pattakos-MRL} to study the interesting problem of the continuity of norm estimates in weighted inequalities for Calder\'on--Zygmund operators.

In that direction, we provide here a sharper estimate for the BMO seminorm in terms of a mixed combination of the Hru\v{s}\v{c}ev and Fujii--Wilson $A_\infty$ constants. This estimate should be compared with the result in \cite[Proposition 1.21]{HP}. 

\begin{theorem}\label{thm:log w in BMO}
	If $w \in A_\infty$, then $\log w \in \operatorname{BMO}$ with
	\begin{equation*}
		\norm{\log w}_{\operatorname{BMO}} \le c_n \log(2[w]_{A_\infty}') ([w]_{A_\infty} - 1),
	\end{equation*}
	for some dimensional constant $c_n > 0$.
\end{theorem}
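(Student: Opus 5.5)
By homogeneity of the statement under $w\mapsto \lambda w$, we fix a cube $Q$ and normalize so that $w_Q=1$. It suffices to prove
\[
\frac1{|Q|}\int_Q|\log w|\le c_n\log(2[w]'_{A_\infty})([w]_{A_\infty}-1),
\]
since the mean oscillation of $\log w$ over $Q$ is at most twice this quantity. We use two inputs. The first is the $L^1$ flatness coming from the Fujii--Wilson constant. Since $M(w\chi_Q)\ge w$ a.e.\ on $Q$ and $M(w\chi_Q)\ge w_Q=1$ on $Q$, we get
\[
\frac1{|Q|}\int_Q (w-1)^+\le \frac1{|Q|}\int_Q \bigl(M(w\chi_Q)-1\bigr)\le [w]_{A_\infty}-1 .
\]
Because $\int_Q(w-1)=0$, this gives $\frac1{|Q|}\int_Q|w-1|\le 2([w]_{A_\infty}-1)$. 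The second input is the Hru\v{s}\v{c}ev constant, applied on subcubes: $\frac1{|P|}\int_P\log(w_P/w)\le\log[w]'_{A_\infty}$ for every cube $P$.

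We split $Q$ into the set where $w\ge 1/2$ and the set $F=\{x\in Q: w(x)<1/2\}$.

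\emph{The set where $w\ge 1/2$.} Here $|\log w|\le 2|w-1|$, using $\log t\le t-1$ for $t\ge1$ and the elementary bound for $t\in[1/2,1]$. So this part contributes at most $4([w]_{A_\infty}-1)|Q|$.

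\emph{The set $F$.} We run a Calder\'on--Zygmund stopping time on the dyadic subcubes of $Q$, selecting the maximal cubes $P$ with $w_P<1/2$. By Lebesgue differentiation, $F$ is contained (up to a null set) in $\bigcup P$. Each selected cube satisfies $\int_P(1-w)\ge |P|/2$, so disjointness gives
\[
\sum_P|P|\le 2\int_Q(1-w)^+\le 4([w]_{A_\infty}-1)|Q|.
\]
This linear-in-$([w]_{A_\infty}-1)$ control of the bad set is the key gain over a John--Nirenberg argument, which would only give $|F|\log(e|Q|/|F|)$. On each selected $P$ we write $\log(1/w)=\log(1/w_P)+\log(w_P/w)$. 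The second term has average at most $\log[w]'_{A_\infty}$ over $P$, by the Hru\v{s}\v{c}ev input.

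\emph{The main obstacle} is bounding $\log(1/w_P)$ by a dimensional constant. By maximality, the dyadic parent $\hat P$ has $w_{\hat P}\ge 1/2$, so we need a quantitative doubling $w(P)\ge \delta_n w(\hat P)$. For this we invoke the sharp reverse H\"older inequality for $A_\infty$ weights with exponent $1+1/(c_n[w]_{A_\infty})$. Applied to $E=P\subset\hat P$ with $|E|/|\hat P|=2^{-n}$, it gives $w(P)/w(\hat P)\ge \delta(n,[w]_{A_\infty})$. Under the harmless restriction $[w]_{A_\infty}\le 2$, this $\delta$ is dimensional. (If $[w]_{A_\infty}>2$, the claim follows directly from Korey's bound $\|\log w\|_{\operatorname{BMO}}\le\log(2[w]'_{A_\infty})$.) Hence $\log(1/w_P)\le C_n$.

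Summing over the selected cubes, and using that the integrand $\log^+(1/w)$ on $F$ is dominated by $|\log(1/w_P)|+\log(w_P/w)$ plus its negative part, gives
\[
\int_F|\log w|\le \bigl(C_n+\log[w]'_{A_\infty}\bigr)\sum_P|P|\lesssim_n\log(2[w]'_{A_\infty})([w]_{A_\infty}-1)|Q|.
\]
The negative part $\log^-(w_P/w)$ needs care here. It is controlled by $\int_P (w/w_P-1)^+$, and therefore again by the flatness estimate, since $w_P\ge \delta_n/2$. Combining the two parts and using $\log(2[w]'_{A_\infty})\ge\log 2$ concludes the proof.
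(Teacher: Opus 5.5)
Your route differs from the paper's and works once one step is fixed: the doubling step is justified incorrectly. A reverse H\"older inequality only gives upper bounds $w(E)\le C(|E|/|\hat P|)^{\theta}w(\hat P)$. Applied to $E=P$ it gives no lower bound for $w(P)$. Applied to $E=\hat P\setminus P$ it gives $w(\hat P\setminus P)\le C(1-2^{-n})^{\theta}w(\hat P)$ with $C\ge 2$ (e.g.\ $C=2[w]_{A_\infty}$ in Corollary~\ref{cor:Ainfty->subsets}), which is not smaller than $w(\hat P)$. This is why the paper's Theorem~\ref{thm:Ainfty-doubling} uses thin shells $\lambda Q\setminus Q$ with $\lambda$ close to $1$ and then iterates.

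You could quote that theorem: with $\hat P\subset 3P\subset 4P$ you get $w(\hat P)\le D_w^2 w(P)$, and $D_w$ is dimensional when $[w]_{A_\infty}\le 2$. A cleaner fix uses the Hru\v{s}\v{c}ev constant you already have. Split $\frac{1}{|\hat P|}\int_{\hat P}\log w$ over $P$ and $\hat P\setminus P$ and apply Jensen to each piece; this gives
\[
\log\frac{w_{\hat P}}{w_P}\le 2^n\log[w]'_{A_\infty}+(2^n-1)\log\frac{2^n}{2^n-1}\le 2^n\log[w]'_{A_\infty}+1 .
\]
Hence $\log(1/w_P)\le\log(2e)+2^n\log[w]'_{A_\infty}\lesssim_n\log(2[w]'_{A_\infty})$, which is exactly the size you are allowed. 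It also makes the case split at $[w]_{A_\infty}=2$ and the appeal to Korey unnecessary.

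Two minor points:
\begin{itemize}
\item The negative part needs neither flatness nor a lower bound on $w_P$, since $\int_P\log^+(w/w_P)\le\int_P w/w_P=|P|$. As written, your phrase ``since $w_P\ge\delta_n/2$'' does not follow.
\item Your count actually gives $\sum_P|P|\le 2([w]_{A_\infty}-1)|Q|$, not $4([w]_{A_\infty}-1)|Q|$.
\end{itemize}

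With that repair, here is how it compares with the paper. The paper first proves the weighted bound $\|\log w\|_{\operatorname{BMO}_w}\le 2^{n+3}([w]_{A_\infty}-1)$ via an $L\log L$ / reverse weak-type comparison. It then passes to unweighted BMO through Tsutsui's inequality $\|f\|_{\operatorname{BMO}}\le c_n\log(2[w]'_{A_\infty})\|f\|_{\operatorname{BMO}_w}$, whose proof needs the John--Nirenberg theorem for non-doubling measures.

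Your argument avoids weighted BMO and John--Nirenberg altogether. The Fujii--Wilson constant enters only through the $L^1$ flatness bound $\frac{1}{|Q|}\int_Q|w-w_Q|\le 2w_Q([w]_{A_\infty}-1)$. A Calder\'on--Zygmund stopping time then covers the set $\{w<w_Q/2\}$ by cubes of total measure $O([w]_{A_\infty}-1)|Q|$, and the Hru\v{s}\v{c}ev constant is used only on those stopping cubes. This is more elementary and self-contained. The paper's route, in exchange, produces the $\operatorname{BMO}_w$ estimate and the general $\operatorname{BMO}_w\to\operatorname{BMO}$ comparison, which are of independent interest.
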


As a final but important contribution, we mention that the sharp embedding theorem for flat weights produces an interesting improvement of the recent Poincar\'e--Sobolev-type inequalities obtained by the first author in \cite{Claros25}. 

\begin{theorem}\label{thm:PoincareSobolev-SharpFlat2}
	Let $n\ge 2$ and let $1< p <n$. Let $w\in A_\infty$ and consider the exponent $p^*_w$ defined by the relation
	\begin{equation*}
		\frac{1}{p}-\frac{1}{p^*_w}= \frac{1}{n}\frac{1}{1+\tau_n ([w]_{A_\infty}-1)}.
	\end{equation*}
	There exists $c_{n,p}>0$ such that if $[w]_{A_\infty} \le 1+ c_{n,p}$, then we have
	\begin{equation*}
		\left( \frac{1}{w(Q)}\int_Q |f-f_{Q}|^{p^*_w}w\right)^\frac{1}{p^*_w}\le c_n p^* (1+\tau_n ([w]_{A_\infty}-1))^\frac{1}{p} \ell(Q) \left( \frac{1}{w(Q)}\int_Q |\nabla f|^p w\right)^\frac{1}{p},
	\end{equation*}
	for every cube $Q$. Moreover, we have
	\begin{equation*}
		 \left\| f-f_{Q}\right\|_{L^{p^*_w, p}\left(Q, \frac{w(x)dx}{w(Q)}\right)}\le c_n p^* (1+\tau_n ([w]_{A_\infty}-1))^\frac{1}{p} \ell(Q) \left( \frac{1}{w(Q)}\int_Q |\nabla f|^p w\right)^\frac{1}{p},
	\end{equation*}
	for every cube $Q$.
\end{theorem}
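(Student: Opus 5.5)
The plan is to combine Theorem \ref{thm:Ainfty Ap} with the known Poincar\'e--Sobolev inequality for $A_q$ weights from \cite{Claros25}, applied at $q=1+\tau_n([w]_{A_\infty}-1)$. The result from \cite{Claros25} we need is of the following form: if $w\in A_q$, $1\le q<n$ and $1\le p<nq$, then for every cube $Q$
\begin{equation*}
\left( \frac{1}{w(Q)}\int_Q |f-f_{Q}|^{p^*_{q}}w\right)^{\frac{1}{p^*_{q}}}\le c_n\, p^*\, [w]_{A_q}^{\frac1p}\, \ell(Q) \left( \frac{1}{w(Q)}\int_Q |\nabla f|^p w\right)^{\frac{1}{p}},
\end{equation*}
where $\frac1p-\frac1{p^*_q}=\frac{1}{nq}$. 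A Lorentz refinement with $L^{p^*_q,p}$ on the left also holds. If the precise dependence on $[w]_{A_q}$ or $q$ in \cite{Claros25} is different (for instance $[w]_{A_q}^{1/p}$ times a factor such as $q^{1/p}$), the final constant changes only by such harmless factors.

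First, fix $c_{n,p}\le c_n$, where $c_n$ is the constant of Theorem \ref{thm:Ainfty Ap}, and assume $[w]_{A_\infty}\le 1+c_{n,p}$. Theorem \ref{thm:Ainfty Ap} then gives $w\in A_q$ with $q=1+\tau_n([w]_{A_\infty}-1)$ and $[w]_{A_q}\le \exp(\tau_n'([w]_{A_\infty}-1))$. We shrink $c_{n,p}$ further so that $q<n/p$; this is possible because $p<n$, and it also guarantees $p<nq$. With this choice the exponent $p^*_q$ coincides with $p^*_w$ in the statement. It also keeps $p^*_w$ finite and comparable to $p^*$, uniformly in the flat regime.

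Next, we insert these bounds into the \cite{Claros25} inequality. Since $[w]_{A_\infty}-1\le c_{n,p}$, we have $\exp(\tau_n'([w]_{A_\infty}-1))\le e^{\tau_n' c_n}$, a dimensional constant. By the Remark after Theorem \ref{thm:Ainfty Ap}, this factor is moreover comparable to $q=1+\tau_n([w]_{A_\infty}-1)$. Hence $[w]_{A_q}^{1/p}\lesssim_n (1+\tau_n([w]_{A_\infty}-1))^{1/p}$. Any constant that depends on $q$ through $p^*_q$ can likewise be bounded by $c_n p^*$, because $p^*_w\le p^*$ (recall $q\ge1$). This yields the strong-type estimate, and the same substitution in the Lorentz version yields the second estimate.

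The main obstacle is making sure that the cited Poincar\'e--Sobolev inequality really has the stated form with a constant uniform in $q$ near $1$. If it is only available in a truncation/weak-type form, I would instead derive it directly. The route would be the standard pointwise bound $|f-f_Q|\lesssim_n I_1^Q(|\nabla f|\chi_Q)$, followed by a weighted fractional integral estimate for $A_q$ weights. One would first prove a weak-type bound through a Welland-type inequality $I_1 g\lesssim (M_{\alpha}g\, M_{\beta}g)^{1/2}\ell(Q)^{\cdots}$ together with the weighted maximal bound with $A_q$ constant. The truncation method (Maz'ya) then upgrades weak to strong type, and the Lorentz norm comes from the same truncation argument. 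Uniformity of all these constants as $q\to1$ is the point to check carefully.
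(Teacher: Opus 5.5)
There is a genuine gap: your proposal never ensures $w\in A_p$, i.e.\ $q\le p$, and this is exactly where the dependence of $c_{n,p}$ on $p$ comes from. Your overall route is the paper's: Theorem \ref{thm:Ainfty Ap} at $q=1+\tau_n([w]_{A_\infty}-1)$, then subrepresentation, a weak-type bound for $I_1$, and truncation.

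\textbf{Where $A_p$ is needed.} The estimate in \cite{Claros25} with gain $\frac{1}{nq}$ is Proposition \ref{prop:weak Riesz}, which assumes $w\in A_r$ with $1\le r\le p$. Your listed hypotheses are not the relevant ones: $p<nq$ is automatic, and $q<n/p$ plays no role. The truncation step also uses the $A_p$ condition, through the auxiliary unweighted estimate $\frac{1}{|Q|}\int_Q|f-f_Q|\le c_n\ell(Q)\frac{1}{|Q|}\int_Q|\nabla f|\le c_n[w]_{A_p}^{1/p}\ell(Q)\big(\frac{1}{w(Q)}\int_Q|\nabla f|^pw\big)^{1/p}$. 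Your $c_{n,p}$ (only $c_{n,p}\le c_n$ and $q<n/p$) stays bounded below as $p\to1^+$, so it permits $q>p$. The paper instead takes $c_{n,p}=\min\{c_n,(p-1)\tau_n^{-1}\}$. Then $q\le 1+\tau_n c_{n,p}\le p$, hence $w\in A_q\subset A_p$.

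\textbf{Why this is not cosmetic.} Take $w(x)=|x_1|^a$ with small $a>0$. Bounding $M(w\chi_Q)\le\sup_Q w$ gives $[w]_{A_\infty}\le(1+a)^2$. Now suppose $a>p-1$, and test with $f(x)=\phi(x_1/\epsilon)$ on $Q=[-1,1]^n$. Here $\phi$ is smooth, $\phi=0$ on $(-\infty,-1]$, $\phi=1$ on $[1,\infty)$, and $\phi(-t)=1-\phi(t)$. The right-hand side is $O(\epsilon^{(1+a-p)/p})\to0$. The left-hand side is at least $\frac{1}{w(Q)}\int_Q|f-\tfrac12|w\to\frac12$. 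So with your choice of $c_{n,p}$, the statement you would be proving is false for $p$ close to $1$; $c_{n,p}$ must tend to $0$ as $p\to1^+$.

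\textbf{Your fallback is the required route.} For $q>1$, the strong-type result of \cite{Claros25} (Theorem \ref{thm:Poincare-Sobolev-Claros}) does not have exponent $\frac{1}{nq}$; its exponent involves $[\sigma]_{A_\infty}$. So your fallback is not optional. It is the paper's argument, except that no Welland-type inequality is needed. The paper applies Proposition \ref{prop:weak Riesz} with $\alpha=1$ directly, then the Lorentz truncation principle \cite[Theorem 2.3]{CPZ}, fed by the $A_p$-based $L^1$ estimate above. This gives the $L^{p^*_w,p}$ bound, and the $L^{p^*_w}$ bound follows since $p<p^*_w$.
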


Although the previous result is restricted to weights with a small $A_\infty$ constant, it has the remarkable consequence of asymptotically recovering the classical
Sobolev exponent when $[w]_{A_\infty}\to 1$. That is, when $[w]_{A_\infty}\to 1$, we have $p^*_w \to p^*=\frac{np}{n-p}$.

To put this result into perspective, it is worth noting that the weighted Poincar\'e inequality does not hold for the entire class $A_\infty$, as shown in \cite[Theorem 1.26]{PR-Poincare}.
Consequently, the Poincar\'e--Sobolev inequality does not hold either. In contrast, our result shows that if the $A_\infty$ constant of $w$ is not too large, that is, if $w$ is not a highly degenerate weight, then a weighted Poincar\'e--Sobolev inequality can be established. In other words, the failure of a Poincar\'e inequality for the entire $A_\infty$ class comes precisely from weights with large $A_\infty$ constants, namely, the most singular weights in the class $A_\infty$.

\subsection*{Outline of the paper}
The article is organized as follows. In Section \ref{sec:Ainfty-Doubling} we present a new result on doubling properties of $A_\infty$ weights, which has the advantage of being asymptotically correct for flat weights. In Section \ref{sec:logw BMO} we present the proof of the intermediate result in Theorem \ref{thm:log w in BMO}. Finally, Section \ref{sec:Ainfty Ap} contains the proof of our main contribution in Theorem \ref{thm:Ainfty Ap}, while Section \ref{sec:PS} details its consequences for Poincar\'e--Sobolev inequalities as presented in Theorem \ref{thm:PoincareSobolev-SharpFlat2}.

\section{From \texorpdfstring{$A_\infty$}{Ainfty} to doubling}\label{sec:Ainfty-Doubling}

We include in this section a result that can be seen as a byproduct of the approach to studying properties of flat weights. We show that if a weight $w$ belongs to $A_\infty$, then the measure $w\, dx$ is doubling. Moreover, we obtain an explicit estimate of the doubling constant in terms of the Fujii--Wilson $A_\infty$ constant of $w$. The novelty here is that the doubling constant shows the correct asymptotic behavior for flat weights. 

 We will use the following reverse H\"older inequality from \cite{ParissisRela}, which is asymptotically sharp for flat weights. Let us mention that, in the case of $A_1$ weights, the first sharp reverse H\"older inequality was proved by Kinnunen \cite{Kinnunen-Dissertationes} in $\mathbb R^n$ with Lebesgue measure, and later extended to general measures in \cite{LuPeRe-strong}. 

\begin{theorem}\label{thm:RHIflat}
	Let $w\in A_\infty$. Then for each $0\le \varepsilon\le \frac{1}{2^{n+1}([w]_{A_\infty} -1)}$,
	\begin{equation*}
		\frac{1}{|Q|}\int_Q w^{1+\varepsilon} \le 2 [w]_{A_\infty}\left(  \frac{1}{|Q|}\int_Q w\right)^{1+\varepsilon},
	\end{equation*}
	for every cube $Q$. 
\end{theorem}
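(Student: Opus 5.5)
This is the sharp reverse Hölder inequality of \cite{ParissisRela}. The plan is the standard Calderón--Zygmund argument, run carefully enough to keep the constant sharp.

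\textbf{Normalization and truncation.} Fix a cube $Q$. By homogeneity we may assume $w_Q=1$. Set $M_Q^d$ for the dyadic maximal operator localized to $Q$. It suffices to prove
\[
\frac{1}{|Q|}\int_Q (M^d_Q w)^\varepsilon\, w \le 2[w]_{A_\infty},
\]
since $w\le M^d_Q w$ a.e. in $Q$. We work with truncations $\min(M^d_Q w, N)$ and let $N\to\infty$ at the end, so that the quantity being estimated is finite and may be absorbed.

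\textbf{Layer-cake formula.} For $\lambda> w_Q=1$ we have $\{M^d_Q w>\lambda\}=\bigcup_j Q_j$, where the $Q_j$ are the maximal dyadic subcubes with $w_{Q_j}>\lambda$. By maximality, $w_{Q_j}\le 2^n\lambda$, so $w(\{M^d_Q w>\lambda\})\le 2^n\lambda\,|\{M_Q^d w>\lambda\}|$. Writing
\[
\int_Q (M^d_Q w)^\varepsilon w = \int_0^\infty \varepsilon\lambda^{\varepsilon-1} w(\{M^d_Qw>\lambda\})\,d\lambda,
\]
we split the integral at $\lambda=1$. The range $\lambda\le1$ contributes at most $w(Q)=|Q|$. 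For $\lambda>1$ we use the bound above and get
\[
\varepsilon 2^n\int_1^\infty \lambda^{\varepsilon}|\{M^d_Qw>\lambda\}|\,d\lambda \le \frac{\varepsilon 2^n}{1+\varepsilon}\int_Q (M^d_Q w)^{1+\varepsilon}.
\]
This is too crude on its own, because the right side has exponent $1+\varepsilon$ on $M w$ instead of $(Mw)^\varepsilon w$. The remedy is to apply the same layer-cake identity to $\int_Q M^d_Q w\,(M_Q^d w)^\varepsilon$ and compare.

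\textbf{Comparison with the $A_\infty$ constant.} The key is to write
\[
\int_Q (M^d w)^{1+\varepsilon}=\int_Q (M^dw)^\varepsilon\,(M^dw - w)+\int_Q (M^dw)^\varepsilon w .
\]
In the first term, the factor $M^dw-w$ has total mass at most $([w]_{A_\infty}-1)w(Q)$. To exploit this weighted against $(M^dw)^\varepsilon$, we again use a layer-cake decomposition. On each maximal cube $Q_j$ at level $\lambda$, the localized Fujii--Wilson condition gives $\int_{Q_j}(M(w\chi_{Q_j})-w)\le([w]_{A_\infty}-1)w(Q_j)$, and $M^d_Q w=M^d_{Q_j}w$ on $Q_j$. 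This yields a recursive inequality of the form
\[
\int_Q (M^dw)^{\varepsilon}(M^dw-w)\le ([w]_{A_\infty}-1)\Big(w(Q)+ \varepsilon 2^n\!\int_Q (M^dw)^{1+\varepsilon}\Big)\cdot C .
\]
Denote $X=\int_Q (M^dw)^{1+\varepsilon}/|Q|$ and $Y=\int_Q (M^dw)^\varepsilon w/|Q|$. Combining the steps gives $X\le Y+([w]_{A_\infty}-1)(1+2^n\varepsilon X)$. When $\varepsilon 2^n([w]_{A_\infty}-1)\le 1/2$ — exactly the hypothesis $\varepsilon\le 1/(2^{n+1}([w]_{A_\infty}-1))$ — we can absorb and obtain $X\le 2(Y+[w]_{A_\infty}-1)$. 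Feeding this back into the estimate for $Y$ from the layer-cake step closes the bound with constant $2[w]_{A_\infty}$.

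\textbf{Main obstacle.} The delicate step is the weighted localization: bounding $\int (M^dw)^\varepsilon(M^dw-w)$ by $([w]_{A_\infty}-1)$ times quantities that can be absorbed. A careless estimate loses the factor $[w]_{A_\infty}-1$ and produces only $[w]_{A_\infty}$, which destroys the flat asymptotics. I would first try the cleanest identity, $\int_Q M^d(w\chi_Q)\,\varphi(M^dw)$ for increasing $\varphi$, organized over the stopping cubes $Q_j$. The sums $\sum_j w(Q_j)$ at each level are then controlled geometrically, which is where the factor $2^{n}$ in the admissible range of $\varepsilon$ comes from.
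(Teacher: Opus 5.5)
\textbf{Gap: the final ``feeding back'' step fails in the flat regime.} That is exactly where the theorem has content. Your only upper bound for $Y$ is the layer-cake estimate $Y\le 1+\frac{2^n\varepsilon}{1+\varepsilon}X$. It rests on $w(\{M^d_Qw>\lambda\})\le 2^n\lambda\,|\{M^d_Qw>\lambda\}|$, and its coefficient tends to $2^n$, not to $0$, as $\varepsilon$ grows. Substituting $X\le 2(Y+[w]_{A_\infty}-1)$ gives $Y\le 1+\frac{2^{n+1}\varepsilon}{1+\varepsilon}(Y+[w]_{A_\infty}-1)$. This can be solved for $Y$ only if $\varepsilon<1/(2^{n+1}-1)$, a range independent of $[w]_{A_\infty}$. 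For flat weights the hypothesis allows $\varepsilon$ up to $1/(2^{n+1}([w]_{A_\infty}-1))\to\infty$. In that range your inequalities are satisfied by $X=Y$ arbitrarily large, so nothing follows; what you actually prove is a non-flat, Hyt\"onen--P\'erez type statement.

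The comparison step is not the bottleneck. Apply $\int_{\{M^d_Qw>\lambda\}}(M^d_Qw-w)\le([w]_{A_\infty}-1)\,w(\{M^d_Qw>\lambda\})$ at every level (for $\lambda<w_Q$ the set is all of $Q$) and integrate against $\varepsilon\lambda^{\varepsilon-1}\,d\lambda$. This even gives the cleaner $X\le[w]_{A_\infty}Y$, yet combined with your $Y$-bound it still closes only for $\varepsilon(2^n[w]_{A_\infty}-1)<1$. The factor $2^n$ from the stopping cubes is the obstruction, and your remark about controlling $\sum_j w(Q_j)$ ``geometrically'' does not remove it.

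\textbf{Missing idea: a flat version of the stopping bound $w_{Q_j}\le 2^n\lambda$.} Put $\delta=[w]_{A_\infty}-1$ and let $\lambda\ge w_Q$, so each stopping cube satisfies $Q_j\subsetneq Q$. Let $P\subseteq Q$ be the dyadic parent of $Q_j$, so that $w_P\le\lambda$. Since $M(w\chi_P)\ge w_P$ on $P$ and $M(w\chi_P)\ge w_{Q_j}$ on $Q_j$, the Fujii--Wilson condition on $P$ gives
\begin{equation*}
(w_{Q_j}-w_P)\,|Q_j|\le \int_P M(w\chi_P)-w(P)\le \delta\, w(P)=2^n\delta\, w_P\,|Q_j|.
\end{equation*}
Hence $w_{Q_j}\le(1+2^n\delta)\lambda$ and $w(\{M^d_Qw>\lambda\})\le(1+2^n\delta)\lambda\,|\{M^d_Qw>\lambda\}|$. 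In your normalization this turns the layer-cake estimate into $Y\le 1+\frac{(1+2^n\delta)\varepsilon}{1+\varepsilon}(X-1)$. Together with $X\le(1+\delta)Y$ it yields $(X-1)\bigl(1-\varepsilon\delta(1+2^n+2^n\delta)\bigr)\le\delta(1+\varepsilon)$, so
\begin{equation*}
\frac{1}{|Q|}\int_Q w^{1+\varepsilon}\le Y\le 1+\frac{(1+2^n\delta)\,\varepsilon\delta}{1-\varepsilon\delta(1+2^n+2^n\delta)}.
\end{equation*}
This holds (via your truncation) whenever $\varepsilon\delta(1+2^n+2^n\delta)<1$. As $\delta\to 0$ that range is of order $1/((2^n+1)\delta)$, which is the correct flat behaviour.

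This repair does not literally reproduce the constants $2^{n+1}$ and $2[w]_{A_\infty}$ of the statement. The paper gives no proof of this theorem at all: it imports it from \cite{ParissisRela}, shrinking the admissible range there to obtain the constant $2[w]_{A_\infty}$.
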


\begin{remark}
	The previous result is not exactly the same as the one stated in \cite{ParissisRela}. Here we slightly weaken the range of admissible $\varepsilon$, but in exchange we obtain a much more manageable constant, namely $2 [w]_{A_\infty}$, on the right-hand side.
\end{remark}

As a consequence, using the standard techniques, we obtain the following corollary.

\begin{corollary}\label{cor:Ainfty->subsets}
	Let $w\in A_\infty$. Then for each cube $Q$ and each measurable subset $E\subset Q$, we have
	\begin{equation*}
		\frac{w(E)}{w(Q)}\le 2[w]_{A_\infty} \left( \frac{|E|}{|Q|}\right)^{\frac{1}{1+2^{n+1}([w]_{A_\infty}-1)}}.
	\end{equation*}
\end{corollary}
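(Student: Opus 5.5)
The plan is the usual Hölder argument, run with the largest exponent that Theorem \ref{thm:RHIflat} allows. Fix a cube $Q$ and a measurable set $E\subset Q$. Put $\delta:=[w]_{A_\infty}-1\ge 0$. If $\delta=0$, then $w$ is constant a.e., so $w(E)/w(Q)=|E|/|Q|$ and there is nothing to prove. Assume from now on that $\delta>0$ and take the endpoint exponent
\[
\varepsilon=\frac{1}{2^{n+1}\delta}.
\]
This value is admissible in Theorem \ref{thm:RHIflat}.

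First apply Hölder's inequality with exponents $1+\varepsilon$ and $(1+\varepsilon)/\varepsilon$:
\[
w(E)=\int_Q w\chi_E\le \Big(\int_Q w^{1+\varepsilon}\Big)^{\frac{1}{1+\varepsilon}}|E|^{\frac{\varepsilon}{1+\varepsilon}}.
\]
Next insert the reverse Hölder inequality of Theorem \ref{thm:RHIflat}, written as
\[
\int_Q w^{1+\varepsilon}\le 2[w]_{A_\infty}\,|Q|\,\big(w(Q)/|Q|\big)^{1+\varepsilon}.
\]
This gives
\[
w(E)\le (2[w]_{A_\infty})^{\frac{1}{1+\varepsilon}}\,|Q|^{\frac{1}{1+\varepsilon}}\,\frac{w(Q)}{|Q|}\,|E|^{\frac{\varepsilon}{1+\varepsilon}}
=(2[w]_{A_\infty})^{\frac{1}{1+\varepsilon}}\,w(Q)\Big(\frac{|E|}{|Q|}\Big)^{\frac{\varepsilon}{1+\varepsilon}}.
\]

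Two simplifications finish the argument. Since $2[w]_{A_\infty}\ge 1$ and $1/(1+\varepsilon)\le 1$, the prefactor is at most $2[w]_{A_\infty}$. For the exponent, a direct computation gives
\[
\frac{\varepsilon}{1+\varepsilon}=\frac{1}{1+1/\varepsilon}=\frac{1}{1+2^{n+1}([w]_{A_\infty}-1)},
\]
which is exactly the exponent in the statement.

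I do not expect any real obstacle. The only care needed is to use the endpoint $\varepsilon$ rather than a smaller one, since that is what produces the stated exponent, and to treat the case $[w]_{A_\infty}=1$ separately, where $\varepsilon$ would be infinite.
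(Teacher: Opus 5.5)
Your argument is correct and is exactly the standard Hölder-plus-reverse-Hölder derivation the paper has in mind when it says the corollary follows from Theorem~\ref{thm:RHIflat} ``using the standard techniques'' (the paper writes out no further proof). You take the endpoint $\varepsilon=\frac{1}{2^{n+1}([w]_{A_\infty}-1)}$, bound $(2[w]_{A_\infty})^{1/(1+\varepsilon)}\le 2[w]_{A_\infty}$, compute $\frac{\varepsilon}{1+\varepsilon}=\frac{1}{1+2^{n+1}([w]_{A_\infty}-1)}$, and correctly handle the case $[w]_{A_\infty}=1$ separately, since the endpoint exponent is undefined there.
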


From Theorem~\ref{thm:RHIflat} we also obtain a left-openness property of the $A_p$ classes. More precisely, given $w \in A_p$, we deduce that $w \in A_{p-\varepsilon}$ for some $\varepsilon>0$ depending on $[\sigma]_{A_\infty}$,  where $\sigma:=w^{1-p'}$ is the dual weight of $w$. Compared with the result in \cite{HPR1}, the value $p-\varepsilon$ that we obtain here is slightly smaller, but on the other hand, the constant is controlled by a mixed-type constant.  Furthermore, this estimate captures the correct asymptotic behavior for flat weights, as we have $p-\varepsilon \to 1$ when $[\sigma]_{A_\infty} \to 1$.

\begin{corollary}\label{cor:Ap-leftopen}
	Let $w\in A_p$ for some $1<p<\infty$. If we consider 
	\begin{equation}\label{eq:open-eps}
		\varepsilon = \frac{p-1}{2^{n+1}([\sigma]_{A_\infty} -1)+1},
	\end{equation}
	then $w\in A_{p-\varepsilon}$ with 
	\begin{equation*}
		[w]_{A_{p-\varepsilon}} \le (2[\sigma]_{A_\infty})^{p-1} [w]_{A_p}. 
	\end{equation*}
\end{corollary}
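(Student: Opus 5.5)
We apply Theorem~\ref{thm:RHIflat} to the dual weight $\sigma=w^{1-p'}$. Since $w\in A_p$, the dual weight $\sigma$ lies in $A_{p'}\subset A_\infty$. Put $\delta=\frac{1}{2^{n+1}([\sigma]_{A_\infty}-1)}$, which is the largest exponent allowed by Theorem~\ref{thm:RHIflat}. For every cube $Q$ this gives
\begin{equation*}
\frac{1}{|Q|}\int_Q \sigma^{1+\delta}\le 2[\sigma]_{A_\infty}\Big(\frac{1}{|Q|}\int_Q\sigma\Big)^{1+\delta}.
\end{equation*}
If $[\sigma]_{A_\infty}=1$, then $\delta=\infty$ and $\sigma$ (hence $w$) is constant. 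In that case $\varepsilon=p-1$, and the claim reduces to $[w]_{A_1}=1\le[w]_{A_p}$; this degenerate case we treat separately.

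Next we choose $q=p-\varepsilon$ so that $\sigma^{1+\delta}=w^{1-q'}$. Writing $(1-p')(1+\delta)=1-q'$, i.e. $(p'-1)(1+\delta)=q'-1$, we get $\frac{1+\delta}{p-1}=\frac{1}{q-1}$. Hence
\begin{equation*}
q-1=\frac{p-1}{1+\delta}=\frac{(p-1)\,2^{n+1}([\sigma]_{A_\infty}-1)}{2^{n+1}([\sigma]_{A_\infty}-1)+1}.
\end{equation*}
Therefore $\varepsilon=p-q=(p-1)\big(1-\frac{2^{n+1}([\sigma]-1)}{2^{n+1}([\sigma]-1)+1}\big)=\frac{p-1}{2^{n+1}([\sigma]_{A_\infty}-1)+1}$, which is exactly \eqref{eq:open-eps}. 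Note that $q>1$ whenever $[\sigma]_{A_\infty}>1$.

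It remains to estimate the $A_q$ characteristic. For each cube $Q$ we have
\begin{equation*}
w_Q\Big(\frac{1}{|Q|}\int_Q w^{1-q'}\Big)^{q-1}= w_Q\Big(\frac{1}{|Q|}\int_Q \sigma^{1+\delta}\Big)^{\frac{p-1}{1+\delta}}\le w_Q\,(2[\sigma]_{A_\infty})^{\frac{p-1}{1+\delta}}\,\sigma_Q^{\,p-1}.
\end{equation*}
Since $w_Q\sigma_Q^{p-1}\le[w]_{A_p}$ and $(2[\sigma])^{\frac{p-1}{1+\delta}}\le(2[\sigma])^{p-1}$ (because $2[\sigma]\ge1$ and $\delta\ge0$), taking the supremum over $Q$ gives $[w]_{A_{p-\varepsilon}}\le(2[\sigma]_{A_\infty})^{p-1}[w]_{A_p}$.

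No step is a real obstacle. The only care needed is the exponent bookkeeping and the degenerate case $[\sigma]_{A_\infty}=1$.
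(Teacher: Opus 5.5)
Your proof is correct and is the argument the paper intends (the paper gives no written proof beyond saying the corollary follows from Theorem~\ref{thm:RHIflat}). You apply that reverse H\"older inequality to $\sigma$ at the endpoint exponent $\delta=\frac{1}{2^{n+1}([\sigma]_{A_\infty}-1)}$, identify $\sigma^{1+\delta}=w^{1-(p-\varepsilon)'}$, and use $(2[\sigma]_{A_\infty})^{\frac{p-1}{1+\delta}}\le(2[\sigma]_{A_\infty})^{p-1}$; your exponent bookkeeping and your handling of the constant case $[\sigma]_{A_\infty}=1$ are both fine.
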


We next show that if a weight belongs to $A_\infty$, then the associated measure $w\, dx$ is doubling, and we provide an explicit estimate of the doubling constant in terms of the Fujii--Wilson $A_\infty$ constant of $w$.  A related quantitative doubling estimate in terms of the Fujii--Wilson $A_\infty$ constant was obtained in \cite[Corollary 3.6(i)]{HagPar-Embeddings}; the novelty of the estimate below is its correct asymptotic behavior for flat weights. 
Related estimates have also been obtained using Hru\v{s}\v{c}ev's exponential $A_\infty$ constant (see, for instance, \cite{GrafakosCF}).

\begin{theorem}\label{thm:Ainfty-doubling}
	Let $w\in A_\infty$. Then the measure $w\, dx$ is doubling, i.e., 
	\begin{equation*}
		w(2Q)\le D_w w(Q),
	\end{equation*}
	for every cube $Q$, where
	\begin{equation*}
		D_w \le \exp \left(\kappa \,  n  (4[w]_{A_\infty})^{1+2^{n+1}([w]_{A_\infty}-1)}\right),
	\end{equation*}
	and $\kappa$ is a dimension-free constant. 
\end{theorem}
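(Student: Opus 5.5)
The plan is to turn Corollary~\ref{cor:Ainfty->subsets}, which bounds the relative $w$-measure of small subsets from above, into a lower bound for $w$ on large subsets. I would then chain this lower bound along a family of nested cubes growing from $Q$ to $2Q$.

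\textbf{Step 1 (large subsets carry at least half the mass).} Write $\delta = \bigl(1+2^{n+1}([w]_{A_\infty}-1)\bigr)^{-1}$ and set
\begin{equation*}
\eta := (4[w]_{A_\infty})^{-1/\delta} = (4[w]_{A_\infty})^{-(1+2^{n+1}([w]_{A_\infty}-1))}.
\end{equation*}
Let $R$ be a cube and $F\subset R$ measurable with $|F|\ge (1-\eta)|R|$. Apply Corollary~\ref{cor:Ainfty->subsets} to $E=R\setminus F$. This gives
\begin{equation*}
\frac{w(R\setminus F)}{w(R)} \le 2[w]_{A_\infty}\,\eta^{\delta} = \frac{1}{2},
\end{equation*}
so $w(F)\ge \tfrac12 w(R)$.

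\textbf{Step 2 (chain of concentric cubes).} Let $Q$ have center $x_Q$ and side $\ell$. For $k=0,1,\dots,N$ let $R_k$ be the cube centered at $x_Q$ with side $\min\{\lambda^k\ell,\,2\ell\}$, where $\lambda=(1-\eta)^{-1/n}$. Then $R_0=Q$, the cubes are nested, and each consecutive pair satisfies $|R_k|\ge(1-\eta)|R_{k+1}|$. We need $\lambda^N\ge 2$, which holds for $N=\lceil n\ln 2/(-\ln(1-\eta))\rceil$; since $-\ln(1-\eta)\ge\eta$, we may take $N\le 1+ n\ln2/\eta$. Step 1 with $R=R_{k+1}$ and $F=R_k$ gives $w(R_{k+1})\le 2\,w(R_k)$. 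Iterating from $R_N=2Q$ down to $R_0=Q$ yields $w(2Q)\le 2^N w(Q)$.

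\textbf{Step 3 (bookkeeping).} We have
\begin{equation*}
2^N\le 2\exp\bigl(n(\ln 2)^2/\eta\bigr)=2\exp\bigl((\ln2)^2\, n\,(4[w]_{A_\infty})^{1+2^{n+1}([w]_{A_\infty}-1)}\bigr).
\end{equation*}
The prefactor $2$ can be absorbed into the exponential with a dimension-free $\kappa$, because the exponent is at least $4n\ge 4$. This gives the claimed form of $D_w$.

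The only delicate point is the choice of $\eta$ in Step 1. It must be exactly calibrated so that $2[w]_{A_\infty}\eta^\delta\le 1/2$, and this calibration is what produces the double-exponential-looking exponent $(4[w]_{A_\infty})^{1/\delta}$. The remaining steps are a routine geometric chaining argument, and the only care needed there is to use concentric cubes so that $R_k\subset R_{k+1}$ holds exactly and the corollary applies with $E=R_{k+1}\setminus R_k$.
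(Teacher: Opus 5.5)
Your proof is correct and is essentially the paper's argument. The paper likewise applies Corollary~\ref{cor:Ainfty->subsets} to $E=\lambda Q\setminus Q$ with $\lambda=(1-(4[w]_{A_\infty})^{-1/\delta})^{-1/n}$ to get $w(\lambda Q)\le 2w(Q)$, iterates this until $\lambda^N\ge 2$, and bounds $N$ using $-\log(1-\eta)\ge\eta$. Your explicit count $N\le 1+n\ln 2/\eta$, together with absorbing the factor $2$ into $\kappa$, makes the bookkeeping a bit more precise than the paper's $\simeq$ estimates.
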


\begin{remark}\label{rem: doubling}
	Using the same argument together with the reverse H\"older inequality from \cite{HPR1} produces the bound  $D_w \lesssim \exp (\exp (c_n [w]_{A_\infty}))$. 
\end{remark}

\begin{proof}
	Let $Q$ be a cube and let $\lambda>1$. Applying Corollary \ref{cor:Ainfty->subsets} to the cube $\lambda Q$ and the subset $E=\lambda Q \setminus Q$, we obtain
	\begin{align*}
		\frac{w( \lambda Q \setminus Q) }{w(\lambda Q )} \le & 2[w]_{A_\infty} \left( \frac{|\lambda Q \setminus Q|}{|\lambda Q|}\right)^\theta  \\
		= & 2[w]_{A_\infty} \left( 1- \frac{| Q|}{|\lambda Q|}\right)^\theta  \\
		= & 2[w]_{A_\infty} \left( 1- \lambda^{-n} \right)^\theta ,
	\end{align*}
	where $\theta= \frac{1}{1+2^{n+1}([w]_{A_\infty}-1)} \in (0,1)$. Hence,
	\begin{align*}
		w(\lambda Q) = & w(Q) + w(\lambda Q \setminus Q)\\
		\le & w(Q) + 2[w]_{A_\infty} \left( 1- \lambda^{-n} \right)^\theta w(\lambda Q).
	\end{align*}
	We now choose $\lambda >1$ such that 
	\begin{equation*}
		2[w]_{A_\infty} \left( 1- \lambda^{-n} \right)^\theta \le \frac{1}{2},
	\end{equation*}
	and consequently,
	\begin{equation*}
		w(\lambda Q)\le 2w(Q).
	\end{equation*}
	The previous inequality holds for any $1<\lambda \le \lambda_0$, where 
	\begin{equation*}
		\lambda_0 = \frac{1}{\left( 1- \left( 4[w]_{A_\infty}\right) ^{-\frac{1}{\theta} } \right)^{\frac{1}{n}}}.
	\end{equation*}
	Let $m\in \mathbb{N}$ be the smallest integer such that $\lambda_0 ^m \ge 2$. Then, we have
	\begin{equation*}
		w(2Q)\le w(\lambda_0^m Q) \le 2^m w(Q). 
	\end{equation*}
	We observe that $m\simeq \frac{\log 2}{\log \lambda_0}$, so it remains to estimate $\log \lambda_0$. Since
	\begin{equation*}
		\log \lambda_0= -\frac{1}{n} \log (1-s),
	\end{equation*}
	 where $s=(4[w]_{A_\infty})^{-\frac{1}{\theta}} \in (0,\frac{1}{4}]$, we can use the elementary inequality
	\begin{equation*}
		s\le -\log (1-s) \le \frac{s}{1-s} \le 2s ,
	\end{equation*}
	which gives  $\log \lambda_0 \simeq \frac{s}{n}$. Therefore, 
	\begin{align*}
		D_w \le & 2^m  \lesssim 2^{\frac{\log 2}{\log \lambda_0}} \le \exp \left( \frac{\log 2}{\log_2 e} \frac{n}{s} \right) \le \exp \left( \kappa \, n \left( 4[w]_{A_\infty} \right)^{1+2^{n+1}([w]_{A_\infty}-1)} \right).
	\end{align*}
\end{proof}

\begin{remark}
In the case of constant weights ($[w]_{A_\infty} = 1$), the doubling constant obtained above depends exponentially on the dimension, which is expected.  Moreover, for flat weights of the form $[w]_{A_\infty}=1+\delta$ with $\delta\to 0$, the upper bound for $D_w$ obtained above converges to $e^{4\kappa n}$. 
\end{remark}

\section{Logarithm of an \texorpdfstring{$A_\infty$}{Ainfty} weight}\label{sec:logw BMO}

In this section, we prove that if $w \in A_\infty$, then $\log w\in \operatorname{BMO}$. Moreover, we provide an explicit bound for the $\operatorname{BMO}$ seminorm of $\log w$ in terms of the $A_\infty$ constant of $w$. For this purpose, we introduce the following logarithmic $A_\infty$ constant to be used throughout the section:
\begin{equation*}
	[w]_{A_\infty}^{\log } :=\sup_Q \frac{1}{w(Q)}\int_Q \left(1+\log^+ \left( \frac{w(x)}{w_Q}\right) \right)w(x)\, dx,
\end{equation*}
where the supremum is taken over all cubes $Q$, and $w_Q = \frac{1}{|Q|}\int_Q w(x)dx$.

\begin{remark}
It is shown in \cite{HP} that $[w]_{A_\infty}^{\log }\le 2^{n+1} [w]_{A_\infty}$. In addition, by \cite{Stein-LlogL} we have the reverse inequality,
	\begin{equation*}
		\int_Q M(w\chi_Q)(x)dx\le 3^n \int_Q \left(1+\log ^+\left( \frac{w(x)}{w_Q}\right) \right) w(x) \, dx, 
	\end{equation*}
	for each cube $Q$. In conclusion, we have $[w]_{A_\infty}^{\log }\simeq_n  [w]_{A_\infty}$.
\end{remark}

As a first step, we show that $\log w$ belongs to the weighted $\operatorname{BMO}$ space, $\mathrm{BMO}_w$. This space, which is  usually   denoted as $\mathrm{BMO}_{w,w}$ in the literature (see, for instance, \cite{OPRRR}), is equipped with the seminorm
\begin{equation*}
    \|f\|_{\operatorname{BMO}_w} := \sup_Q \frac{1}{w(Q)} \int_Q |f(x) - f_{Q,w}| w(x)\,dx,
\end{equation*}
where the supremum is taken over all cubes $Q$, and $f_{Q,w} = \frac{1}{w(Q)} \int_Q f(x) w(x)\,dx$ is the weighted average of $f$ on the cube $Q$.

\begin{proposition}
	If $w\in A_\infty$, then $\log w \in \operatorname{BMO}_w$ and 
	\begin{equation}\label{eq:BMOw - Ainfty-log}
		\norm{\log w }_{\operatorname{BMO}_w} \le 8 ( [w]_{A_\infty}^{\log} -1 ). 
	\end{equation}
\end{proposition}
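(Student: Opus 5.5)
The plan is to compare the weighted oscillation of $\log w$ with its deviation from the constant $\log w_Q$, normalize, and then control the negative part of $\log$ by the positive part using Jensen's inequality. This gives the bound with constant $4$, which is better than the stated $8$.

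\textbf{Step 1: replace the weighted mean by $\log w_Q$.} Fix a cube $Q$. For any constant $c$, the triangle inequality gives
\begin{equation*}
\frac{1}{w(Q)}\int_Q |\log w-(\log w)_{Q,w}|\,w \le \frac{2}{w(Q)}\int_Q |\log w - c|\,w .
\end{equation*}
I will choose $c=\log w_Q$. This costs a factor $2$ and matches the quantity appearing in $[w]^{\log}_{A_\infty}$.

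\textbf{Step 2: normalize.} Set $u=w/w_Q$ on $Q$, so that $\frac{1}{|Q|}\int_Q u=1$. Since $w(Q)=w_Q|Q|$, we have
\begin{equation*}
\frac{1}{w(Q)}\int_Q |\log w-\log w_Q|\,w=\frac{1}{|Q|}\int_Q u\,|\log u|
\end{equation*}
and
\begin{equation*}
\frac{1}{|Q|}\int_Q u\log^+u=\frac{1}{w(Q)}\int_Q \log^+\Big(\frac{w}{w_Q}\Big)w\le [w]^{\log}_{A_\infty}-1 .
\end{equation*}
Write $|\log u|=\log^+u+\log^-u$. The positive part is then already bounded by $[w]^{\log}_{A_\infty}-1$.

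\textbf{Step 3: control the negative part (the only real step).} The function $\Phi(t)=t\log t$ is convex on $[0,\infty)$. Applied with the normalized Lebesgue measure on $Q$ and the fact that $u$ has average $1$, Jensen's inequality gives
\begin{equation*}
\frac{1}{|Q|}\int_Q u\log u\ \ge\ \Phi(1)=0 .
\end{equation*}
This is legitimate because $u\log^- u\le e^{-1}$ is bounded, and $u\log^+u$ is integrable since $[w]^{\log}_{A_\infty}\simeq_n[w]_{A_\infty}<\infty$ by the preceding remark. Rearranging,
\begin{equation*}
\frac{1}{|Q|}\int_Q u\log^-u\le \frac{1}{|Q|}\int_Q u\log^+u .
\end{equation*}
Hence
\begin{equation*}
\frac{1}{|Q|}\int_Q u|\log u|\le 2([w]^{\log}_{A_\infty}-1).
\end{equation*}

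\textbf{Step 4: conclude.} Combining with Step 1 gives the oscillation bound $4([w]^{\log}_{A_\infty}-1)\le 8([w]^{\log}_{A_\infty}-1)$. Taking the supremum over all cubes $Q$ yields \eqref{eq:BMOw - Ainfty-log}.

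The main point to watch is Step 3, where one must check integrability before splitting $u\log u$ into its positive and negative parts. Everything else is bookkeeping of normalizations.
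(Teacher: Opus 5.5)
Your proof is correct, and it takes a different route from the paper at the key step: controlling the part of $\log(w/w_Q)$ where $w<w_Q$.

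\textbf{The paper's route.} It first uses that $\log w-(\log w)_{Q,w}$ has zero $w$-mean to reduce to twice the positive part. Comparing with $\log w_Q$ then produces a correction term $I_2=2(\log w_Q-(\log w)_{Q,w})^+$, which is bounded by $2P+2N$. The negative-part term $N$ is estimated through the chain $\log t\le t-1$, then the $L^1$ oscillation of $w$, then $t-1\le t\log t$. Altogether $I_2$ costs $6([w]^{\log}_{A_\infty}-1)$, which leads to the constant $8$.

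\textbf{Your route.} You pay a factor $2$ upfront via the triangle inequality with $c=\log w_Q$. You then dispose of the negative part in one stroke with the entropy inequality $\frac{1}{|Q|}\int_Q u\log u\ge 0$ for $u=w/w_Q$. Equivalently, integrate $t\log t\ge t-1$ and use $\int_Q (u-1)=0$. This gives $N\le P$ in the paper's notation and hence the constant $4$. Your per-cube version likewise improves the paper's subsequent remark from $8$ to $4$.

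\textbf{Comparison.} The paper's argument uses only pointwise elementary inequalities and no convexity, but it is longer and loses more. One small ordering point on your side: the integrability you check in Step 3 is also what makes $(\log w)_{Q,w}$ well defined in Step 1, so it logically belongs before Step 1.

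\textbf{A further improvement.} Your observation, applied inside the paper's decomposition, shows $(\log w)_{Q,w}-\log w_Q=\frac{1}{|Q|}\int_Q u\log u\ge 0$. So the term $I_2$ vanishes identically, and combining the two arguments gives the bound with constant $2$.
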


\begin{proof}
	Let $w\in A_\infty$ and let $Q$ be a cube.  Since $[w]_{A_\infty}^{\log}<\infty$ and $t\log(1/t)\le e^{-1}$ for $0< t \le 1$, we have $\log w\in L^1_{\mathrm{loc}}(w\,dx)$. Consequently, $\log w\in L^1(Q,w\,dx)$ and the weighted average $(\log w)_{Q,w}$ is well defined.   Since $\log w - (\log w)_{Q, w}$ has zero integral on the cube $Q$ with respect to the weighted measure $w\, dx$, we have
	\begin{align*}
		\frac{1}{w(Q)} \int_Q |\log w - (\log w)_{Q, w}| w = & \frac{2}{w(Q)} \int_Q \left(\log w - (\log w)_{Q, w}\right)^+ w \\
		\le & \frac{2}{w(Q)} \int_Q \left(\log w - \log w_Q\right)^+ w \\
		& + 2 \left(\log w_Q - (\log w)_{Q, w} \right)^+\\
		= & I_1 + I_2,
	\end{align*}
	where $a^+= \max (a,0)$. We can easily estimate the first term
	\begin{align*}
		I_1 = & \frac{2}{w(Q)} \int_Q \log ^+\left( \frac{w(x)}{w_Q}\right) w(x)\,dx \\
		= & 2\left( \frac{1}{w(Q)} \int_Q \left( 1 + \log ^+\left( \frac{w(x)}{w_Q}\right)\right) w(x)\,dx - 1 \right) \\
		\le & 2 ( [w]_{A_\infty}^{\log} -1) .
	\end{align*}
	
	On the other hand,
	\begin{align*}
		(\log w)_{Q, w} - \log w_Q = & \frac{1}{w(Q)}\int_Q (\log w(x)- \log w_Q )w(x)\,dx\\
		= & \frac{1}{w(Q)}\int_Q (\log w(x)- \log w_Q )^+ w(x)\,dx\\
		& - \frac{1}{w(Q)}\int_Q (\log w_Q - \log w(x) )^+ w(x)\,dx\\
		= & P-N.
	\end{align*}
	
	We estimate $P$ and $N$ separately. The bound for $P$ is analogous to that for $I_1$,
	\begin{align*}
		P = & \frac{1}{w(Q)} \int_Q \log ^+\left( \frac{w(x)}{w_Q}\right) w(x)\,dx \\
		= & \frac{1}{w(Q)} \int_Q \left( 1 + \log ^+\left( \frac{w(x)}{w_Q}\right)\right) w(x)\,dx - 1 \\
		\le & [w]_{A_\infty}^{\log} -1 .
	\end{align*}
	To estimate $N$, we use the elementary inequality $\log t \le t-1$,
	\begin{align*}
		N = & \frac{1}{w(Q)}\int_Q \left( \log \left( \frac{w_Q}{w(x)}\right) \right)^+ w(x)\,dx \\
		\le & \frac{1}{w(Q)}\int_{ \{ w\le w_Q\} } \left( \frac{w_Q}{w(x)}-1\right) w(x)\,dx \\
		= & \frac{1}{w(Q)}\int_{ Q } \left( w_Q - w(x) \right)^+ \,dx \\
		\le & \frac{1}{w(Q)}\int_{ Q } | w(x) - w_Q | \,dx \\
		= & \frac{2}{w(Q)}\int_{ Q } ( w(x) - w_Q )^+ \,dx \\
		= & \frac{2}{w(Q)}\int_{ \{ w\ge w_Q\} } w_Q \left( \frac{w(x)}{w_Q} - 1 \right) \,dx \\
		\le & \frac{2}{w(Q)}\int_{ \{ w\ge w_Q\} } w_Q \frac{w(x)}{w_Q} \log \left( \frac{w(x)}{w_Q} \right) \,dx \\
		= & \frac{2}{w(Q)}\int_{ Q } \log^+ \left( \frac{w(x)}{w_Q} \right) w(x)\,dx \\
		\le & 2([w]_{A_\infty}^{\log} -1),
	\end{align*}
	where in the third inequality we use $t-1\le t\log t$ for $t\ge 1$. Therefore,
	\begin{align*}
		I_2 = & 2 \left(\log w_Q - (\log w)_{Q, w} \right)^+ \\
		\le & 2 |\log w_Q - (\log w)_{Q, w} |\\
		\le & 2 |(\log w)_{Q, w} -\log w_Q |\\
		= & 2 |P-N| \\
		\le & 2P+2N \\
		\le & 2([w]_{A_\infty}^{\log} -1) + 4([w]_{A_\infty}^{\log} -1)\\
		= & 6 ([w]_{A_\infty}^{\log} -1).
	\end{align*}
	Combining the previous estimates, we have
    \begin{align*}
		\frac{1}{w(Q)} \int_Q |\log w - (\log w)_{Q, w}| w \le 8 ([w]_{A_\infty}^{\log} -1),
	\end{align*}
    for every cube $Q$. Taking the supremum over all cubes $Q$ completes the proof.
\end{proof}

\begin{remark}
	A closer inspection of the argument shows that we have, in fact, proved a stronger statement. For each cube $Q$, we have
	\begin{equation*}
		\frac{1}{w(Q)} \int_Q |\log w(x) - (\log w)_{Q, w}| w(x)\,dx  \le  \frac{8}{w(Q)} \int_Q \log ^+\left( \frac{w(x)}{w_Q}\right) w(x)\,dx.
	\end{equation*}
\end{remark}

The estimate obtained above has the correct asymptotic behavior for flat weights with respect to the logarithmic $A_\infty$ constant $[w]_{A_\infty}^{\log}$, in the sense that as $[w]_{A_\infty}^{\log} \to 1$, both sides of the inequality go to zero. However, if we simply used Lemma 6.2 from \cite{HP}, which states that $[w]_{A_\infty}^{\log} \le 2^{n+1} [w]_{A_\infty}$, we would lose this property. Indeed, such a bound would yield $\norm{\log w}_{\operatorname{BMO}_w} \le c_n [w]_{A_\infty} - 1$, which clearly does not vanish as $[w]_{A_\infty} \to 1$. Therefore, we need to refine this result to obtain an inequality that preserves the correct asymptotic behavior for flat weights.

\begin{lemma}
	Let $w$ be a weight. Then, for every cube $Q$, we have
	\begin{equation*}
		\frac{1}{w(Q)} \int_Q \log ^+\left( \frac{w(x)}{w_Q}\right) w(x)\,dx \le 2^n \left( \frac{1}{w(Q)} \int_Q M(w\chi_Q)(x)\,dx - 1\right).
	\end{equation*}
	Hence, if $w\in A_\infty$, 
	\begin{equation}\label{eq:Ainftylog - Ainfty}
		[w]_{A_\infty}^{\log} -1 \le 2^n ( [w]_{A_\infty} -1).
	\end{equation}
\end{lemma}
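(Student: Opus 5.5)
The plan is to compare both sides through a layer-cake representation at levels $t>w_Q$, and then to bound each level set by a Calderón--Zygmund decomposition of $w$ on $Q$. This is essentially Stein's $L\log L$ argument, but done carefully so that the subtracted term $w_Q$ (equivalently the ``$-1$'') survives on both sides.

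Fix a cube $Q$. By Fubini, since $\log^+(w/w_Q)=\int_{w_Q}^{w}\frac{dt}{t}$ on $\{w>w_Q\}$,
\begin{equation*}
\int_Q \log^+\Big(\frac{w}{w_Q}\Big) w\,dx=\int_{w_Q}^\infty w\big(\{x\in Q: w(x)>t\}\big)\,\frac{dt}{t}.
\end{equation*}
On the other side, $w(Q)=|Q|\,w_Q$, and $M(w\chi_Q)\ge w_Q$ on $Q$. Hence
\begin{equation*}
\int_Q M(w\chi_Q)\,dx-w(Q)=\int_Q \big(M(w\chi_Q)-w_Q\big)\,dx=\int_{w_Q}^\infty \big|\{x\in Q: M(w\chi_Q)(x)>t\}\big|\,dt .
\end{equation*}
It therefore suffices to prove, for every $t>w_Q$,
\begin{equation*}
w(\{x\in Q: w>t\})\le 2^n\, t\,|\{x\in Q: M(w\chi_Q)>t\}|.
\end{equation*}
Dividing by $t$, integrating over $t\in(w_Q,\infty)$, and then dividing by $w(Q)$ gives the first inequality of the lemma.

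For the level-set estimate I use the Calderón--Zygmund decomposition at height $t>w_Q$. Let $\{Q_j\}$ be the maximal dyadic subcubes of $Q$ with $w_{Q_j}>t$. Since $w_Q\le t$, each $Q_j$ is a proper subcube, so maximality applied to its dyadic parent gives $t<w_{Q_j}\le 2^n t$. By the Lebesgue differentiation theorem, $\{x\in Q: w>t\}\subset\bigcup_j Q_j$ up to a null set. Consequently
\begin{equation*}
w(\{w>t\}\cap Q)\le\sum_j w(Q_j)\le 2^n t\sum_j|Q_j|.
\end{equation*}
Every point of $Q_j$ satisfies $M(w\chi_Q)\ge w_{Q_j}>t$, and the $Q_j$ are disjoint. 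Hence $\sum_j|Q_j|\le|\{x\in Q: M(w\chi_Q)>t\}|$, which is the claimed level-set bound.

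For the second statement, add $1$ to both sides of the cubewise inequality. Since $2^n\ge1$ and $\frac{1}{w(Q)}\int_Q M(w\chi_Q)-1\ge0$, taking suprema over $Q$ gives $[w]^{\log}_{A_\infty}-1\le 2^n([w]_{A_\infty}-1)$.

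The only delicate point is bookkeeping rather than estimation. One must integrate from $w_Q$ and not from $0$, and use the identity $w(Q)=|Q|w_Q$, so that the ``$-1$'' on the right matches exactly the truncation at $t=w_Q$ in the $\log^+$ term. Otherwise one falls back to the lossy bound $[w]^{\log}_{A_\infty}\le 2^{n+1}[w]_{A_\infty}$, which does not vanish in the flat regime.
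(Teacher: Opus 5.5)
Your proof is correct and follows the paper's argument. The paper also writes both sides as layer-cake integrals over $t>w_Q$ and compares them levelwise via the reverse weak-type $(1,1)$ inequality $w(\{x\in Q: w>t\})\le 2^n t\,|\{x\in Q: M(w\chi_Q)>t\}|$. It obtains the ``$-1$'' from $w(Q)=|Q|w_Q$, through the split $\int_{w_Q}^\infty=\int_0^\infty-\int_0^{w_Q}$, which is exactly your identity $\int_Q (M(w\chi_Q)-w_Q)\,dx=\int_{w_Q}^\infty|\{x\in Q: M(w\chi_Q)>t\}|\,dt$. The only difference is that you prove the reverse weak-type bound with the Calder\'on--Zygmund decomposition, whereas the paper quotes it without proof.
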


\begin{proof}
	The estimate follows from the reverse weak-type (1,1) inequality,\begin{equation*}
		\frac{1}{t} \int_{\{ x\in Q :  w(x)>t\}} w\, dx \le 2^n \left| \{ x\in Q :  M(w\chi_Q)(x)>t\} \right|,
	\end{equation*}
	for every $t>w_Q$. By the layer-cake formula applied to $s\log^+ (\tfrac{s}{w_Q})$, we have 
	\begin{align*}
		\frac{1}{w(Q)} \int_Q \log ^+\left( \frac{w}{w_Q}\right) w = & \frac{1}{w(Q)}\int_{w_Q}^\infty \frac{1}{t}  \left( \int_{\{ x\in Q :  w(x)>t\}} w\, dx  \right) \, dt \\
		\le &  \frac{2^n}{w(Q)}\int_{w_Q}^\infty \left| \{ x\in Q :  M(w\chi_Q)(x)>t\} \right|  \,dt \\
		= &  \frac{2^n}{w(Q)}\int_{0}^\infty \left| \{ x\in Q :  M(w\chi_Q)(x)>t\} \right|  \,dt \\
		& -  \frac{2^n}{w(Q)}\int_{0}^{w_Q} \left| \{ x\in Q :  M(w\chi_Q)(x)>t\} \right|  \,dt\\
		= &  \frac{2^n}{w(Q)} \int_Q M(w\chi_Q)(x)\,dx\\
		& -  \frac{2^n}{w(Q)}\int_{0}^{w_Q} \left| \{ x\in Q :  M(w\chi_Q)(x)>t\} \right|  \,dt.
	\end{align*}
	We note that since $w_Q \le M(w\chi_Q)(x)$ for each $x\in Q$,  for every $0\le t< w_Q$ we have $\{x\in Q : M(w\chi_Q)(x) > t\} = Q$. Consequently,
	\begin{align*}
		\frac{2^n}{w(Q)}\int_{0}^{w_Q} & \left| \{ x\in Q :  M(w\chi_Q)(x)>t\} \right| \, dt \\
		= & \frac{2^n}{w(Q)} |Q| \int_0^{w_Q} dt \\
		= & 2^n \frac{|Q|}{w(Q)} w_Q \\
		= & 2^n. 
	\end{align*}
	Hence, 
	\begin{align*}
		\frac{1}{w(Q)} \int_Q \left(1+\log ^+\left( \frac{w}{w_Q}\right) \right) w \, dx - 1 = & \frac{1}{w(Q)} \int_Q \log ^+\left( \frac{w}{w_Q}\right) w\, dx\\
		 \le &  2^n \left( \frac{1}{w(Q)} \int_Q M(w\chi_Q)\,dx -1 \right),
	\end{align*}
	and we conclude the proof by taking the supremum over $Q$. 
\end{proof}

Combining the two previous results, we obtain a bound for the $\operatorname{BMO}_w$ seminorm in terms of the Fujii--Wilson $A_\infty$ constant.

\begin{corollary}\label{cor: logw in BMOw}
	If $w\in A_\infty$, then $\log w \in \operatorname{BMO}_w$ and 
	\begin{equation}\label{eq:BMOw - Ainfty}
		\norm{\log w }_{\operatorname{BMO}_w} \le 2^{n+3} ( [w]_{A_\infty} -1 ). 
	\end{equation}
	In addition, 
	\begin{equation*}
		\frac{1}{w(Q)} \int_Q |\log w(x) - (\log w)_{Q, w}| w(x)\,dx  \le 2^{n+3} \left( \frac{1}{w(Q)} \int_Q M(w\chi_Q)\,dx -1 \right),
	\end{equation*}
	for each cube $Q$. 
\end{corollary}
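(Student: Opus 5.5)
The corollary is a direct combination of the Proposition and the Lemma, applied cube by cube. I would start with the pointwise (per-cube) version and then take the supremum.

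Fix a cube $Q$. The remark after the Proposition gives
\begin{equation*}
\frac{1}{w(Q)} \int_Q |\log w - (\log w)_{Q, w}|\, w \le \frac{8}{w(Q)} \int_Q \log^+\left( \frac{w}{w_Q}\right) w .
\end{equation*}
The right-hand side is $8$ times exactly the quantity bounded in the Lemma. Inserting the Lemma's estimate yields
\begin{equation*}
\frac{1}{w(Q)} \int_Q |\log w - (\log w)_{Q, w}|\, w \le 8\cdot 2^n \left( \frac{1}{w(Q)}\int_Q M(w\chi_Q) - 1\right),
\end{equation*}
and $8\cdot 2^n = 2^{n+3}$. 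This is the second, localized assertion of the corollary.

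For \eqref{eq:BMOw - Ainfty}, note that the bracket on the right is at most $[w]_{A_\infty}-1$ for every cube, by the definition of $[w]_{A_\infty}$ as a supremum over cubes. Taking the supremum over $Q$ on the left then gives $\norm{\log w}_{\operatorname{BMO}_w}\le 2^{n+3}([w]_{A_\infty}-1)$.

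Alternatively, one can chain the global estimates \eqref{eq:BMOw - Ainfty-log} and \eqref{eq:Ainftylog - Ainfty} directly: $\norm{\log w}_{\operatorname{BMO}_w}\le 8([w]^{\log}_{A_\infty}-1)\le 2^{n+3}([w]_{A_\infty}-1)$. The per-cube route is still needed for the localized statement.

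The only point needing care is well-definedness. The Proposition's proof already shows that $\log w\in L^1(Q,w\,dx)$ whenever $w\in A_\infty$, so $(\log w)_{Q,w}$ is defined and the Remark applies. There is no real obstacle; the step to check is simply that the Remark's per-cube inequality is used with the same cube as the Lemma's, so the constants multiply cleanly.
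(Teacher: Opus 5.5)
Your proposal is correct and matches the paper's argument. The corollary follows by multiplying the per-cube bound in the Remark after the Proposition (constant $8$) with the per-cube estimate of the Lemma (constant $2^n$), and the global bound then follows by taking the supremum over cubes. You are also right that chaining the two global estimates only yields the first assertion, while the localized inequality needs the cube-by-cube version.
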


The following result is due to Tsutsui \cite{Tsutsui}.  However, for the convenience of the reader and the sake of completeness, we provide a detailed proof here.

\begin{theorem}\label{thm:Tsutsui}
	There exists a dimensional constant $c_n>0$ such that for every $w\in A_\infty$ and every $f\in \operatorname{BMO}_w$,
	\begin{equation}\label{eq:Tsutsui}
		\|f\|_{\operatorname{BMO}} \le c_n \log(2[w]_{A_\infty}') \|f\|_{\operatorname{BMO}_w},
	\end{equation}
	where $[w]_{A_\infty}'$ is the Hru\v{s}\v{c}ev $A_\infty$ constant.
\end{theorem}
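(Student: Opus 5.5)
The plan is to go through a John--Nirenberg type estimate for $\operatorname{BMO}_w$, and then convert smallness in $w$-measure into smallness in Lebesgue measure using the Hru\v{s}\v{c}ev constant. Exponential decay in the first step is exactly what produces the logarithm. Chebyshev's inequality alone would only give $w(E)\le \frac{\|f\|_{\operatorname{BMO}_w}}{t}w(Q)$, and after the conversion this yields a bound polynomial in $[w]'_{A_\infty}$ instead of $\log(2[w]'_{A_\infty})$.

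\emph{Step 1 (from $w$-measure to Lebesgue measure).} Fix a cube $Q$ and $E\subset Q$ with $|E|=\alpha|Q|$. Split $\frac{1}{|Q|}\int_Q\log w$ into its parts on $E$ and on $Q\setminus E$, and apply Jensen's inequality on each piece:
\begin{equation*}
\frac{1}{|Q|}\int_Q\log w\le \alpha\log\frac{w(E)}{\alpha|Q|}+(1-\alpha)\log\frac{w(Q)}{(1-\alpha)|Q|}.
\end{equation*}
The definition of $[w]'_{A_\infty}$ gives $\frac{1}{|Q|}\int_Q\log w\ge \log w_Q-\log[w]'_{A_\infty}$. Combining this with the display and using $(1-\alpha)\log\frac{1}{1-\alpha}\le e^{-1}$, I expect to get
\begin{equation*}
\alpha\log\frac{\alpha\, w(Q)}{w(E)}\le \log[w]'_{A_\infty}+e^{-1}\le 2\log(2[w]'_{A_\infty}).
\end{equation*}
Consequently, for a fixed small $\lambda$ (say $\lambda=2^{-n-2}$), the condition $w(E)\le \lambda\,(2[w]'_{A_\infty})^{-2/\lambda}w(Q)$ forces $|E|<\lambda|Q|$.

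\emph{Step 2 (weighted John--Nirenberg).} Next I will show that $\operatorname{BMO}_w$ has exponential decay with dimensional constants:
\begin{equation*}
w(\{x\in Q:|f-f_{Q,w}|>t\})\le C_n e^{-c_n t/\|f\|_{\operatorname{BMO}_w}}w(Q).
\end{equation*}
I would take $t=C'_n\|f\|_{\operatorname{BMO}_w}\log(2[w]'_{A_\infty})$, with $C'_n$ large enough that the right-hand side is at most $\lambda(2[w]'_{A_\infty})^{-2/\lambda}w(Q)$. Step 1 then gives
\begin{equation*}
|\{x\in Q:|f-f_{Q,w}|>t\}|<\lambda|Q|.
\end{equation*}

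\emph{Step 3 (back to $\operatorname{BMO}$).} This last estimate bounds the local mean oscillation $\omega_\lambda(f;Q)$ by $t$ uniformly in $Q$. By the Str\"omberg--John characterization of $\operatorname{BMO}$ via $\omega_\lambda$ (valid for $\lambda\le 2^{-n-2}$ with dimensional constants), it follows that $\|f\|_{\operatorname{BMO}}\lesssim_n \sup_Q\omega_\lambda(f;Q)\lesssim_n \log(2[w]'_{A_\infty})\|f\|_{\operatorname{BMO}_w}$, which is \eqref{eq:Tsutsui}.

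The main obstacle is Step 2. The John--Nirenberg constants must not depend on $w$, and in particular not on its doubling constant; Theorem~\ref{thm:Ainfty-doubling} would inject $[w]_{A_\infty}$ into $c_n$ and destroy the logarithmic dependence. A dyadic Calder\'on--Zygmund stopping argument with respect to $w\,dx$ runs into exactly this problem when passing from a parent cube to a stopping child. I would first try a Calder\'on--Zygmund decomposition with respect to the measure $w\,dx$ built from non-dyadic cubes and the Besicovitch covering theorem, whose overlap constant is purely dimensional for any Radon measure. The iteration would use the $\operatorname{BMO}_w$ condition on cubes comparable to the selected ones, giving geometric decay $w(\{|f-f_{Q,w}|>kA\|f\|_{\operatorname{BMO}_w}\})\le 2^{-k}w(Q)$. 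If controlling the jump of averages between a selected cube and its enlargement turns out to require doubling, the fallback is to keep Step 2 but carry out both the stopping time and the conversion of Step 1 at the level of a single cube and its subcubes. Step 1 needs no doubling, so it can be applied inside each stopping cube, and the decay can then be run directly in Lebesgue measure via the median-oscillation estimate of Step 3.
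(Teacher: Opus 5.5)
Your argument is sound and reaches the result by a different route from the paper's. You should not try to prove Step 2 yourself, though. It is exactly the John--Nirenberg inequality for $\operatorname{BMO}(\mu)$ with a non-doubling measure due to Mateu, Mattila, Nicolau and Orobitg. Its constants depend only on $n$, and it applies because $w\,dx$ vanishes on hyperplanes. The paper uses the same result as a black box, in the form $\|f-f_{Q,w}\|_{\exp L(Q,w)}\le c_n\|f\|_{\operatorname{BMO}_w}$, which is equivalent up to dimensional constants to your distributional form. Your fallback would not replace it. What Step 3 needs on each cube is a bound for $\omega_\lambda(f;Q)$. Without exponential decay in $w$-measure on that cube, Step 1 only upgrades Chebyshev to a bound polynomial in $[w]'_{A_\infty}$, as you observed yourself.

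Granting that input, the two proofs differ in how the Hru\v{s}\v{c}ev constant is used.
\begin{itemize}
\item The paper uses the Jensen-type inequality $\exp\bigl(\frac{1}{|Q|}\int_Q g\bigr)\le [w]'_{A_\infty}\frac{1}{w(Q)}\int_Q e^{g}w$ for $g\ge 0$, with $g=|f-f_{Q,w}|/\|f-f_{Q,w}\|_{\exp L(Q,w)}$. This gives directly $\frac{1}{|Q|}\int_Q|f-f_{Q,w}|\le \log(2[w]'_{A_\infty})\|f-f_{Q,w}\|_{\exp L(Q,w)}$. Then $\frac{1}{|Q|}\int_Q|f-f_Q|\le \frac{2}{|Q|}\int_Q|f-f_{Q,w}|$ finishes the argument cube by cube.
\item Your Step 1 is essentially the same Jensen inequality tested only on functions $g=K\chi_E$. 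It therefore controls a single level set rather than the full $L^1$ average. That is why you then need the John--Str\"omberg characterization of BMO through $\omega_\lambda$, e.g.\ via Lerner's local mean oscillation decomposition.
\end{itemize}

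Your computations check out. The bound $\alpha\log\frac{\alpha w(Q)}{w(E)}\le\log[w]'_{A_\infty}+e^{-1}<2\log(2[w]'_{A_\infty})$ does force $|E|<\lambda|Q|$. Your $C_n'$ depends only on $n$, through $\lambda=2^{-n-2}$.

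So your route is valid, but it costs more than the paper's. It is longer, it invokes an extra nontrivial theorem, and it gives a worse dimensional constant, since the factor $2/\lambda=2^{n+3}$ enters $C_n'$. There is no gain in return, because both proofs rest on the same measure-independent weighted John--Nirenberg inequality.
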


\begin{proof}
    Fix a cube $Q$.  By standard BMO arguments, we have
    \begin{equation*}
        \frac{1}{|Q|}\int_Q |f(x)-f_Q|\,dx \le 2\inf_{c\in \R} \frac{1}{|Q|}\int_Q |f(x)-c|\,dx \le \frac{2}{|Q|}\int_Q |f(x)-f_{Q,w}|\,dx.
    \end{equation*}

    Next, by the characterization of $[w]_{A_\infty}'$ given in \cite[Proposition 7.3.2 (5)]{GrafakosCF}, for every non-negative measurable function $g$ we have
    \begin{equation*}
        \exp\left(\frac{1}{|Q|}\int_Q g(x)\,dx\right)
        \le [w]_{A_\infty}' \frac{1}{w(Q)}\int_Q \exp(g(x)) w(x)\,dx.
    \end{equation*}
    On the other hand, by the definition of the Luxemburg norm, any measurable function $h$ satisfies
    \begin{equation*}
        \frac{1}{w(Q)}\int_Q \exp\left(\frac{|h(x)|}{\|h\|_{\exp L(Q,w)}}\right) w(x)\,dx \le 2.
    \end{equation*}
    We now apply these two facts with
    \begin{equation*}
        g(x)=\frac{|f(x)-f_{Q,w}|}{\|f-f_{Q,w}\|_{\exp L(Q,w)}},
    \end{equation*}
    and $h(x)=f(x)-f_{Q,w}$. Hence, we obtain
    \begin{equation*}
        \exp\left(\frac{1}{|Q|}\int_Q \frac{ |f(x)-f_{Q,w}|} {\|f-f_{Q,w}\|_{\exp L(Q,w)}} \,dx\right) \le 2[w]_{A_\infty}'.
    \end{equation*}
    Taking logarithms gives
    \begin{equation*}
        \frac{1}{|Q|}\int_Q |f(x)-f_{Q,w}|\,dx \le \log(2[w]_{A_\infty}')\, \|f-f_{Q,w}\|_{\exp L(Q,w)}.
    \end{equation*}
	
	 Since $w\,dx$ is absolutely continuous with respect to Lebesgue measure, it vanishes on hyperplanes. Therefore, we can apply  the John--Nirenberg theorem for non-doubling measures due to Mateu, Mattila, Nicolau, and Orobitg \cite[Theorem 1]{MMNO}\footnote{As explicitly stated at the end of the proof of Theorem 1 in \cite{MMNO}, the constants in the John--Nirenberg inequality depend only on the dimension and, in particular, are independent of the measure $\mu$.}. It follows that there exists a dimensional constant $c_n>0$ such that
    \begin{equation*}
        \|f-f_{Q,w}\|_{\exp L(Q,w)} \le c_n \|f\|_{\operatorname{BMO}_w}.
    \end{equation*}
    Combining the previous estimates, we conclude that
    \begin{equation*}
        \frac{1}{|Q|}\int_Q |f(x)-f_Q|\,dx
        \le 2c_n \log(2[w]_{A_\infty}')\,\|f\|_{\operatorname{BMO}_w}.
    \end{equation*}
    Taking the supremum over all cubes $Q$ completes the proof.
\end{proof}

As a consequence of the results above, we obtain the result on the BMO norm of $\log w$ presented in Theorem \ref{thm:log w in BMO}.

\begin{proof}[Proof of Theorem \ref{thm:log w in BMO}. ]
This is just putting together \eqref{eq:BMOw - Ainfty} from Corollary \ref{cor: logw in BMOw} and Tsutsui's result in Theorem \ref{thm:Tsutsui}.
\end{proof}

\section{Proof of Theorem \ref{thm:Ainfty Ap} and its consequences}\label{sec:Ainfty Ap}

Using the results established in the previous section, we now proceed with the proof of Theorem \ref{thm:Ainfty Ap}. Our approach follows the technique of \cite{Politis} (see also \cite{Korey, Mitsis}).

\begin{proof}[Proof of Theorem \ref{thm:Ainfty Ap}]

The goal is to obtain control on the $A_p$ condition for $w\in A_\infty$ by assuming some quantitative control on $[w]_{A_\infty}$. Let's start by considering $r\ge 1$ to be chosen later and connect the $A_p$ condition 
with previous estimates on the BMO norm of $\log w$.

Let $Q$ be a cube. By Jensen's inequality and the elementary inequality $\exp(t)\le \exp (|t|)$ we obtain
	\begin{align*}
		\frac{1}{|Q|}\int_Q w \left( \frac{1}{|Q|}\int_Q w^{-r}\right)^\frac{1}{r} \le & \left( \frac{1}{|Q|}\int_Q w^r\right)^\frac{1}{r} \left( \frac{1}{|Q|}\int_Q w^{-r}\right)^\frac{1}{r}\\
		= & \left( \frac{1}{|Q|}\int_Q e^{r \log w} \right)^\frac{1}{r} \left( \frac{1}{|Q|}\int_Q e^{-r \log w}\right)^\frac{1}{r}\\
		= & \left( \frac{1}{|Q|}\int_Q e^{r (\log w - (\log w)_Q)} \right)^\frac{1}{r} \left( \frac{1}{|Q|}\int_Q e^{-r (\log w - (\log w)_Q)}\right)^\frac{1}{r}\\
		\le &\left( \frac{1}{|Q|}\int_Q e^{|r| |\log w - (\log w)_Q|} \right)^\frac{1}{r} \left( \frac{1}{|Q|}\int_Q e^{|-r| |\log w - (\log w)_Q|}\right)^\frac{1}{r}\\
		= &\left( \frac{1}{|Q|}\int_Q e^{r |\log w - (\log w)_Q|} \right)^\frac{2}{r}.
	\end{align*}
This last quantity is exactly what we know how to control by using the BMO estimates for $\log w$. More precisely, by Theorem \ref{thm:log w in BMO} we have $\log w\in \operatorname{BMO}$, and by the John--Nirenberg inequality we can obtain that
	\begin{equation*}
		|\{ x\in Q :  |\log w(x) - (\log w)_Q | > t \}| \le 2 \exp \left( - \frac{\alpha_n t }{\|\log w\|_{\operatorname{BMO}}} \right) |Q|.
	\end{equation*}
	Hence, for every $0<r\le \frac{\alpha_n}{2\|\log w\|_{\operatorname{BMO}}}$, we have
	\begin{equation}\label{eq: JN exp}
		\frac{1}{|Q|}\int_Q \exp \left( r |\log w(x) - (\log w)_Q |\right) \,dx \le 3.
	\end{equation}
	For every such $r\ge 1$, we use \eqref{eq: JN exp} to control the last integral in the previous estimate and obtain
	\begin{equation}\label{eq:Ap condition - r}
		\frac{1}{|Q|}\int_Q w \left( \frac{1}{|Q|}\int_Q w^{-r}\right)^\frac{1}{r} \le 3^\frac{2}{r}.
	\end{equation}
Two things remain to be proved: on the one hand, we need to show that $r$ above can be chosen such that $r\ge 1$. This will be a consequence of the estimates on $\|\log w\|_{\operatorname{BMO}}$. On the other hand, we need to exploit this result to obtain an explicit expression for $p([w]_{A_\infty})$, the best possible index for which $w\in A_p$.

For the first one, we simply need to restrict to weights with small constant, $[w]_{A_\infty} =1+\delta$ with $\delta$ small enough that  $\|\log w\|_{\operatorname{BMO}}\le \frac{\alpha_n}{2}$. To that end, recall that we have the estimate from Theorem \ref{thm:log w in BMO}, namely
$$
	\norm{\log w}_{\operatorname{BMO}} \le c_n \log(2[w]_{A_\infty}') ([w]_{A_\infty} - 1).
$$
Since our standing assumption is that $[w]_{A_\infty}=1+\delta$ with a small $\delta$, we may assume that $[w]_{A_\infty}$ is bounded by, let's say, 2. From that, we want to obtain that $[w]'_{A_\infty}$ is controlled as well. This can be found in \cite[Theorem 1.3]{BR14}, where the inequality 
$$
[w]'_{A_\infty}\le C \frac{e^{e^{[w]_{A_\infty}} + 1}}{e^{[w]_{A_\infty}}}
$$
is proved in dimension 1. For the higher-dimensional case in $\mathbb{R}^n$, $n\ge 2$, the result is from \cite[Corollary 5.6]{HagPar-Embeddings} where the inequality  
$$
[w]'_{A_\infty}\le e^{e^{c_n [w]_{A_\infty}}}
$$ 
has been proved. Hence, under the assumption $[w]_{A_\infty}=1+\delta$ with $\delta\le 1$, we have
$$
	\norm{\log w}_{\operatorname{BMO}} \le C_n ([w]_{A_\infty} - 1).
$$
We now choose the dimensional constant $c_n$ in the statement so that $c_n\le \min\{1,\frac{\alpha_n}{2C_n}\}$. If $\delta=0$, then $w$ is constant almost everywhere and the conclusion follows immediately. Assume that $0<\delta\le c_n$, and set $r=\frac{\alpha_n}{2C_n\delta}$. Since $\|\log w\|_{\operatorname{BMO}}\le C_n\delta$, we have $r\le \frac{\alpha_n}{2\|\log w\|_{\operatorname{BMO}}}$, so this choice is admissible in \eqref{eq: JN exp}. Moreover, $r\ge 1$. Setting $p-1=\frac{1}{r}$, we obtain
\begin{equation*}
	p=1+\frac{2C_n}{\alpha_n}([w]_{A_\infty}-1)
\end{equation*}
and \eqref{eq:Ap condition - r} yields
\begin{equation*}
	[w]_{A_p}\le 3^{2(p-1)}=3^{\frac{4C_n}{\alpha_n}([w]_{A_\infty}-1)}.
\end{equation*}
Thus, the result follows with $\tau_n=\frac{2C_n}{\alpha_n}$ and $\tau_n'=2\log 3\,\tau_n$. This concludes the proof.
\end{proof}

The only result of this type, a quantitative embedding of $A_\infty$ into some $A_p$ class, that we are aware of in the literature is the following theorem from \cite{HagPar-Embeddings}. 

\begin{theorem}
	If $w\in A_\infty$, then $w\in A_p$ with $p=e^{c_n [w]_{A_\infty}}$ and $[w]_{A_p}\le  e^{e^{c_n [w]_{A_\infty}}}$ for some dimensional constant $c_n>1$.
\end{theorem}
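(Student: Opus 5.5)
The plan is to rerun the scheme from the proof of Theorem \ref{thm:Ainfty Ap} without the smallness assumption. In that proof Jensen's inequality with $r\ge 1$ controlled $\frac{1}{|Q|}\int_Q w$, and $r\ge1$ is exactly where flatness was used. For a general $A_\infty$ weight I would instead control $\frac1{|Q|}\int_Q w$ by Hru\v{s}\v{c}ev's constant, which the literature already bounds by a double exponential of $[w]_{A_\infty}$. The steps, in order, are:
\begin{enumerate}
\item bound $\|\log w\|_{\operatorname{BMO}}$;
\item choose $r$ (possibly $<1$) admissible for John--Nirenberg;
\item estimate the $A_{1+1/r}$ characteristic on each cube;
\item read off $p$ and $[w]_{A_p}$.
\end{enumerate}

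\emph{Step 1.} By Theorem \ref{thm:log w in BMO} together with \cite[Corollary 5.6]{HagPar-Embeddings}, i.e.\ $[w]'_{A_\infty}\le e^{e^{c_n[w]_{A_\infty}}}$, we get
\begin{equation*}
\|\log w\|_{\operatorname{BMO}}\le c_n\log\bigl(2e^{e^{c_n[w]_{A_\infty}}}\bigr)([w]_{A_\infty}-1)\le e^{c_n'[w]_{A_\infty}}.
\end{equation*}
Here the factor $[w]_{A_\infty}-1\le [w]_{A_\infty}$ is absorbed into the exponential, using $[w]_{A_\infty}\ge1$ and enlarging the constant.

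\emph{Steps 2--3.} Set $r=\frac{\alpha_n}{2\|\log w\|_{\operatorname{BMO}}}$, with $\alpha_n$ the John--Nirenberg constant, so that \eqref{eq: JN exp} gives $\frac1{|Q|}\int_Q e^{r|\log w-(\log w)_Q|}\le 3$ on every cube. If $\log w$ is constant there is nothing to prove. Fix $Q$ and use the defining property of $[w]'_{A_\infty}$ in the form $\frac1{|Q|}\int_Q w\le [w]'_{A_\infty}\exp\bigl((\log w)_Q\bigr)$. This yields
\begin{equation*}
\frac{1}{|Q|}\int_Q w\left(\frac1{|Q|}\int_Q w^{-r}\right)^{\frac1r}\le [w]'_{A_\infty}\left(\frac1{|Q|}\int_Q e^{-r(\log w-(\log w)_Q)}\right)^{\frac1r}\le [w]'_{A_\infty}\,3^{1/r}.
\end{equation*}
This step needs no restriction $r\ge1$.

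\emph{Step 4.} Put $p=1+\frac1r=1+\frac{2}{\alpha_n}\|\log w\|_{\operatorname{BMO}}$. Then $w\in A_p$ with
\begin{equation*}
[w]_{A_p}\le [w]'_{A_\infty}\,3^{p-1}\le e^{e^{c_n[w]_{A_\infty}}}\,3^{e^{c_n'[w]_{A_\infty}}}\le e^{e^{c_n''[w]_{A_\infty}}},
\end{equation*}
and $p\le 1+\frac2{\alpha_n}e^{c'_n[w]_{A_\infty}}\le e^{c''_n[w]_{A_\infty}}$. Since the $A_p$ classes increase in $p$ and $[w]_{A_q}\le[w]_{A_p}$ for $q\ge p$ (Hölder), the statement follows for the exact exponent $p=e^{c_n[w]_{A_\infty}}$ after renaming constants; we may take $c_n>1$.

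The main obstacle is Step 1. All of the size of $p$ and of $[w]_{A_p}$ is governed by the bound on $\log(2[w]'_{A_\infty})$, which in turn rests on the external comparison of $[w]'_{A_\infty}$ with $[w]_{A_\infty}$. If that comparison were unavailable, I would first try to obtain it from the reverse H\"older inequality of \cite{HPR1}, with exponent $1+\frac{1}{c_n[w]_{A_\infty}}$, combined with the doubling bound of Remark \ref{rem: doubling}.
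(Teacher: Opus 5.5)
This is a genuine gap: the proof is circular. Your Steps 2--4 are correct, and replacing Jensen's inequality by $\frac{1}{|Q|}\int_Q w\le [w]'_{A_\infty}\exp\bigl((\log w)_Q\bigr)$ does remove the restriction $r\ge 1$. It gives $w\in A_p$ with $p=1+\frac{2}{\alpha_n}\|\log w\|_{\operatorname{BMO}}$ and $[w]_{A_p}\le 3^{p-1}[w]'_{A_\infty}$. With Korey's bound $\|\log w\|_{\operatorname{BMO}}\le\log(2[w]'_{A_\infty})$ you would not even need Theorem \ref{thm:log w in BMO}.

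The problem is Step 1, the imported estimate $[w]'_{A_\infty}\le e^{e^{c_n[w]_{A_\infty}}}$, which carries all of the content. It is not an independent input. By Jensen, $\exp\bigl(\frac{1}{|Q|}\int_Q \log w^{-1}\bigr)\le \bigl(\frac{1}{|Q|}\int_Q w^{-1/(p-1)}\bigr)^{p-1}$, so $[w]'_{A_\infty}\le [w]_{A_p}$ for every $p>1$. The bound you assume is therefore the $p\to\infty$ endpoint of the statement you want to prove. You also take it from the very paper \cite{HagPar-Embeddings} whose theorem this is, where it appears as a consequence of the embedding (hence the identical double exponential). What you have actually established is only the classical implication from a bound on Hru\v{s}\v{c}ev's constant to a quantitative $A_p$ embedding, not the theorem in terms of $[w]_{A_\infty}$.

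The missing idea is how to pass from the Fujii--Wilson constant to control of the small values of $w$. The constant $[w]_{A_\infty}$ directly controls only the upper tail of $w$, via reverse H\"older and Corollary \ref{cor:Ainfty->subsets}. Both $[w]'_{A_\infty}$ and $[w]_{A_p}$ instead require a lower bound of the type $w(E)/w(Q)\gtrsim (|E|/|Q|)^{p}$ for $E\subset Q$. Your fallback (reverse H\"older from \cite{HPR1} plus the doubling bound) is only a sketch. Reverse H\"older again yields upper bounds for $w(E)$, and you do not say how the lower-tail estimate would follow, let alone with $p$ of size $e^{c_n[w]_{A_\infty}}$.

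Note that the paper gives no proof of this statement; it simply quotes it from \cite{HagPar-Embeddings}. There the embedding is derived from quantitative weighted Solyanik estimates for the Tauberian constants of $M$ with respect to $w\,dx$. Such Tauberian bounds are exactly what produce lower bounds for $w(E)/w(Q)$, since for $E\subset Q$ with $|E|>\alpha|Q|$ one has $Q\subset\{M\chi_E>\alpha\}$.
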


\begin{remark}
	We note that the corresponding result in \cite{HagPar-Embeddings} is originally formulated with the strict inequality $p > e^{c_n [w]_{A_\infty}}$. However, one can readily obtain the statement with equality, $p = e^{c_n [w]_{A_\infty}}$, by replacing $c_n$ with a slightly larger dimensional constant.
\end{remark}

Our result improves this estimate in the case $[w]_{A_\infty}\le 1+c_n$, yielding the correct asymptotic behavior for flat weights. That is, when $[w]_{A_\infty}\to 1$, we have $p\to 1$ and $[w]_{A_p}\to 1$.  Putting together our result with the embedding of \cite{HagPar-Embeddings}, we obtain the following quantitative general embedding.

\begin{corollary}
	Let $w\in A_\infty$. Then $w\in A_p$ with
	\begin{equation*}
		p = \begin{cases}
			1+\tau_n ([w]_{A_\infty}-1) & \text{ if } [w]_{A_\infty} \le 1+c_n,\\
			e^{C_n [w]_{A_\infty}} & \text{ if } [w]_{A_\infty} > 1+c_n,
		\end{cases}
	\end{equation*}
	for some dimensional constants $c_n, C_n>0$. In addition, 
	\begin{equation*}
		[w]_{A_p}\le \begin{cases}
			e^{\tau_n'  ([w]_{A_\infty} -1)} & \text{ if } [w]_{A_\infty} \le 1+c_n,\\
			e^{e^{C_n [w]_{A_\infty}}} & \text{ if } [w]_{A_\infty} > 1+c_n.
		\end{cases}
	\end{equation*}
\end{corollary}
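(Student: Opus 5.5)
This corollary is a case split that combines two results already stated. I would fix the dimensional constant $c_n$ and the constants $\tau_n,\tau_n'$ from Theorem \ref{thm:Ainfty Ap}, and treat the two regimes $[w]_{A_\infty}\le 1+c_n$ and $[w]_{A_\infty}>1+c_n$ separately.

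In the flat regime $[w]_{A_\infty}\le 1+c_n$, Theorem \ref{thm:Ainfty Ap} applies directly. It gives $w\in A_p$ with $p=1+\tau_n([w]_{A_\infty}-1)$ and $[w]_{A_p}\le e^{\tau_n'([w]_{A_\infty}-1)}$, which is exactly the first line of both displays. Nothing further is needed here.

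In the complementary regime $[w]_{A_\infty}>1+c_n$, I would invoke the embedding of \cite{HagPar-Embeddings} in the form stated just above, with equality $p=e^{c_n[w]_{A_\infty}}$ as in the subsequent remark. Renaming its dimensional constant as $C_n$ (to avoid a clash with the threshold $c_n$) gives $w\in A_p$ with $p=e^{C_n[w]_{A_\infty}}$ and $[w]_{A_p}\le e^{e^{C_n[w]_{A_\infty}}}$.

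The only point requiring any care is notational: the constant in the Hagelstein--Parissis theorem is independent of the smallness threshold of Theorem \ref{thm:Ainfty Ap}. The two must therefore be kept as separate dimensional constants $c_n$ and $C_n$. Apart from that the argument is immediate, and I do not expect any genuine obstacle.
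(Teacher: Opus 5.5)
Your proposal is correct and follows the paper's approach: the corollary is just Theorem \ref{thm:Ainfty Ap} in the regime $[w]_{A_\infty}\le 1+c_n$, combined with the Hagelstein--Parissis embedding from \cite{HagPar-Embeddings} otherwise. In that second case you correctly use the equality form of $p$, after enlarging the constant as in the preceding remark, and keep it as a separate dimensional constant $C_n$.
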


\begin{remark}
The restriction $[w]_{A_\infty}\le 1+c_n$ is natural in our approach. It is required to guarantee that the integrability exponent $r$ obtained from the John--Nirenberg inequality is large enough (specifically, $r \ge 1$), which is a necessary condition for our estimates.    However, one may conjecture that even for large $A_\infty$ constants, the embedding could still hold with $p$ of the order $C_n [w]_{A_\infty}$.
\end{remark}

\begin{corollary}
	There exist dimensional constants $c_n, C_n>0$ such that if $w\in A_\infty$ with $[w]_{A_\infty}\le 1+c_n$, then
	\begin{equation*}
		\|f\|_{\operatorname{BMO}} \le C_n \|f\|_{\operatorname{BMO}_w}.
	\end{equation*} 
\end{corollary}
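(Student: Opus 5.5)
This corollary follows from Tsutsui's estimate, Theorem \ref{thm:Tsutsui}, once the Hru\v{s}\v{c}ev constant $[w]'_{A_\infty}$ is bounded by a dimensional constant in the flat regime. The plan is to apply \eqref{eq:Tsutsui} directly and then control $\log(2[w]'_{A_\infty})$ uniformly for all weights with $[w]_{A_\infty}\le 1+c_n$.

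\emph{Step 1.} Take $w\in A_\infty$ with $[w]_{A_\infty}\le 1+c_n$, and let $f\in \operatorname{BMO}_w$. Theorem \ref{thm:Tsutsui} gives
\begin{equation*}
\|f\|_{\operatorname{BMO}} \le c_n' \log(2[w]'_{A_\infty})\,\|f\|_{\operatorname{BMO}_w},
\end{equation*}
where $c_n'$ is the dimensional constant from \eqref{eq:Tsutsui}.

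\emph{Step 2.} Bound the Hru\v{s}\v{c}ev constant. Choose $c_n\le 1$, so that $[w]_{A_\infty}\le 2$. The estimate of \cite[Corollary 5.6]{HagPar-Embeddings} (and \cite[Theorem 1.3]{BR14} when $n=1$), already quoted in the proof of Theorem \ref{thm:Ainfty Ap}, then yields
\begin{equation*}
[w]'_{A_\infty}\le e^{e^{c_n''[w]_{A_\infty}}}\le e^{e^{2c_n''}}.
\end{equation*}
Therefore
\begin{equation*}
\log(2[w]'_{A_\infty})\le \log 2 + e^{2c_n''}.
\end{equation*}

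\emph{Step 3.} Set $C_n:=c_n'(\log 2+e^{2c_n''})$. Combining Steps 1 and 2 gives $\|f\|_{\operatorname{BMO}}\le C_n\|f\|_{\operatorname{BMO}_w}$, which is the claim.

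The smallness hypothesis is used only to keep $[w]_{A_\infty}$ bounded, so the constant $c_n$ here can be taken to be the one from Theorem \ref{thm:Ainfty Ap}; in fact any bound on $[w]_{A_\infty}$ would work. The only delicate point is the dependence of $C_n$: it must depend on the dimension alone. This holds because the John--Nirenberg constant used in Theorem \ref{thm:Tsutsui} is independent of the measure, and the embedding $[w]_{A_\infty}\Rightarrow[w]'_{A_\infty}$ is explicit.
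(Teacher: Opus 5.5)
Your proof is correct, but it takes a different and shorter route than the paper's.

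\textbf{The paper's route.} The paper derives the corollary from its main embedding, in three steps. First, by Theorem \ref{thm:Ainfty Ap}, in the flat regime $w\in A_p$ with $p=1+\tau_n([w]_{A_\infty}-1)$ and $[w]_{A_p}$ dimensionally bounded. Second, the doubling constant $D_w$ is bounded via Remark \ref{rem: doubling}. Third, on each cube it applies the $A_p$ condition, $\frac{1}{|Q|}\int_Q|f-f_Q|\le [w]_{A_p}^{1/p}\bigl(\frac{1}{w(Q)}\int_Q|f-f_Q|^p w\bigr)^{1/p}$, and then the quantitative John--Nirenberg inequality for doubling measures from \cite{MarRivRel}. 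This gives a constant of the form $C_n D_w\, p\,[w]_{A_p}^{1/p}$.

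\textbf{Your route.} You feed the bound on $[w]'_{A_\infty}$ from \cite[Corollary 5.6]{HagPar-Embeddings} (or \cite{BR14} when $n=1$) directly into Tsutsui's estimate \eqref{eq:Tsutsui}.

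\textbf{Comparison.} The proof of Theorem \ref{thm:Ainfty Ap} itself goes through Theorem \ref{thm:log w in BMO}, i.e.\ through exactly Theorem \ref{thm:Tsutsui} and the same control of $[w]'_{A_\infty}$. So the paper's argument uses every ingredient of yours, plus the doubling estimate and the external result of \cite{MarRivRel}; yours is strictly more economical.

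Your route also shows, as you observe, that the smallness of $[w]_{A_\infty}-1$ is inessential. You obtain $\|f\|_{\operatorname{BMO}}\le c_n(\log 2+e^{c_n[w]_{A_\infty}})\|f\|_{\operatorname{BMO}_w}$ for every $w\in A_\infty$, so any a priori bound on $[w]_{A_\infty}$ yields a uniform constant.

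What the paper's route buys is mainly expository: it presents the corollary as an application of the $A_p$ embedding with controlled $p$, $[w]_{A_p}$ and $D_w$. It is not quantitatively better, since in the flat regime both constants are merely dimensional.
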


\begin{proof}
	Let $c_n$ and $\tau_n$ be the dimensional constants given by Theorem \ref{thm:Ainfty Ap}. Hence, if $[w]_{A_\infty}\le 1+c_n$, then $w\in A_p$ with $p=1+\tau_n ([w]_{A_\infty}-1)$ and $[w]_{A_p}\le C_n$. Moreover, we observe that since $[w]_{A_\infty}\le 1+c_n$, we have $D_w\le e^{e^{c_n}}$ (see Remark \ref{rem: doubling}). Let $Q$ be a cube. We have
	\begin{align*}
		\frac{1}{|Q|}\int_Q |f(x)-f_Q|dx\le & [w]_{A_p}^\frac{1}{p} \left( \frac{1}{w(Q)}\int_Q |f(x)-f_Q|^p w(x)dx\right)^\frac{1}{p}\\
		\le & [w]_{A_p}^\frac{1}{p} C_n D_w p \| f\|_{\operatorname{BMO}_w}\\
		\le & C_n \| f\|_{\operatorname{BMO}_w},
	\end{align*}
	where in the second inequality we use \cite[Theorem 1.1]{MarRivRel}, and in the last inequality we use that $p$, $[w]_{A_p}$ and $D_w$ are bounded by a dimensional constant. 
\end{proof}

\section{Applications to weighted Poincar\'e--Sobolev inequalities}\label{sec:PS}

In this section, we present some applications to weighted Poincar\'e--Sobolev inequalities, motivated by \cite{PR-Poincare}. Our first application improves \cite[Theorem 2.4]{Claros25} by incorporating the open property from Corollary \ref{cor:Ap-leftopen} into the argument developed in \cite{Claros25}, based on Riesz potentials and truncation. The resulting improvement is stated in Theorem \ref{thm:PoincareSobolev-SharpFlat1} below.

Although the results in \cite[Theorem 2.4 and Proposition 4.4]{Claros25} are stated for balls, the same proofs yield the corresponding formulations for cubes used below.
    
\begin{theorem}[\cite{Claros25}]\label{thm:Poincare-Sobolev-Claros}
Let $1\le p<n$ and $w\in A_r$ for some $1\le r\le p$. Then the following inequality holds
	\begin{equation}\label{eq:weighted PS}
		\left( \frac{1}{w(Q)}\int_Q |f-f_{Q}|^q w\right)^\frac{1}{q}\le C_w \ell(Q) \left( \frac{1}{w(Q)}\int_Q |\nabla f|^p w\right)^\frac{1}{p},	
	\end{equation}
for every cube $Q$, where the exponent $q>p$ is defined as follows. If $r>1$, let $\sigma=w^{1-r'}$ and define $q$ by
	\begin{equation*}
		\frac{1}{p}-\frac{1}{q}= \frac{1}{n}\frac{\tau_n [\sigma]_{A_\infty}}{1+r(\tau_n [\sigma]_{A_\infty}-1)},
	\end{equation*}
where $\tau_n=2^{n+1}$. If $r=1$, we set $q=p^*$.
In both cases, $C_w=c_{n} p^* [w]_{A_r}^{\frac{1}{p}}$, and the exponent $\frac{1}{p}$ in the $A_r$ constant is sharp.
\end{theorem}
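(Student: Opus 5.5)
The statement is Theorem~\ref{thm:Poincare-Sobolev-Claros}, quoted from \cite{Claros25}. My plan is to reproduce its proof through the Riesz potential representation combined with a truncation argument.

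\textbf{Step 1: pointwise bound.} Start from the classical estimate
\begin{equation*}
|f(x)-f_Q|\le c_n I_1(|\nabla f|\chi_Q)(x), \qquad x\in Q.
\end{equation*}
This reduces the problem to a weighted weak-type bound for the fractional integral $I_1$ on $Q$.

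\textbf{Step 2: the case $r=1$.} For $w\in A_1$, estimate the distribution function $w(\{x\in Q: I_1(g\chi_Q)(x)>t\})$ with $g=|\nabla f|$. To do so:
\begin{itemize}
\item split $I_1$ into a local part (scales below $\delta$) and a global part (scales above $\delta$);
\item bound the local part by $\delta Mg$;
\item bound the global part through H\"older's inequality against $w$, using $w(x)^{-1}\le [w]_{A_1}/w_B$ to pass from Lebesgue to weighted averages.
\end{itemize}
Optimizing in $\delta$ gives the weak-type inequality $L^p(w)\to L^{p^*,\infty}(w)$ with constant $c_n[w]_{A_1}^{1/p}$. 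The truncation method of Maz'ya, applied to the slices $\min(\max(|f-c|-2^k,0),2^k)$, then upgrades this weak-type bound to the strong $L^{p^*}$ bound. The slices are used because their gradients have disjoint supports.

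\textbf{Step 3: the case $r>1$.} Here we cannot use $A_1$. Instead:
\begin{itemize}
\item For each ball, replace the $A_1$ lower bound by H\"older's inequality with exponent $r$: $\frac{1}{|B|}\int_B g\le [w]_{A_r}^{1/r}\big(\frac{1}{w(B)}\int_B g^r w\big)^{1/r}$.
\item Since $r\le p$, this is controlled by $L^p(w)$ averages. The resulting gain in the weak-type exponent requires quantitative control of $w(B)$ relative to $w(Q)$ for subballs $B\subset Q$.
\item Get that control from the $A_r$ condition: $\frac{|B|}{|Q|}\le [w]_{A_r}^{1/r}\big(\frac{w(B)}{w(Q)}\big)^{1/r}$. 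To improve the exponent, run this with a sharper reverse H\"older/openness exponent for $\sigma$, namely $A_r\subset A_{r-\varepsilon}$ through Theorem~\ref{thm:RHIflat} applied to $\sigma$ with $\varepsilon\sim 1/(\tau_n[\sigma]_{A_\infty})$.
\end{itemize}
This yields $w(B)/w(Q)\gtrsim (|B|/|Q|)^{s}$ with $s = r-\frac{r-1}{\tau_n[\sigma]_{A_\infty}}$. The corresponding Sobolev gain $1/p-1/q = 1/(ns)$ matches the stated formula. The constant $[w]_{A_r}^{1/p}$ arises because the $A_r$ factor enters once inside a $p$-th root.

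\textbf{Step 4: truncation and sharpness.} Truncate again as in Step~2. For sharpness of the exponent $1/p$, test power weights $|x|^{\alpha}$ against radial bumps.

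\textbf{Main obstacle.} I expect the hardest part to be Step~3: the mixed weak-type estimate with the precise exponent $s$. Bookkeeping the reverse H\"older exponent for $\sigma$ into the measure-comparison exponent must produce exactly the stated $q$ without losing extra powers of $[w]_{A_r}$.
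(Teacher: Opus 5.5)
Your overall architecture matches the route the paper indicates for this quoted result: subrepresentation, a weak-type bound for $I_1$ with gain $\frac{1}{ns}$, openness $A_r\subset A_s$ with $s=r-\frac{r-1}{\tau_n[\sigma]_{A_\infty}}$, then truncation. Compare the proof of Theorem~\ref{thm:PoincareSobolev-SharpFlat1}, which applies openness, then Proposition~\ref{prop:weak Riesz} with $A_s$, then truncation.

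\textbf{The weak-type constant.} The gap is in how you obtain the weak-type bound, which is exactly where $[w]_{A_r}^{1/p}$ must come from. In Step~3 you use the Muckenhoupt condition twice. You use it once on each ball $B$ (H\"older, factor $[w]_{A_r}^{1/r}$), and again to compare $w(Q)/w(B)\le [w]_{A_s}(|Q|/|B|)^{s}$ (factor $[w]_{A_s}^{1/p}$). Write $\mu=w\,dx/w(Q)$ and $g=|\nabla f|\chi_Q$. Then your far part is bounded by $K\,\ell(Q)^{ns/p}\delta^{1-ns/p}\|g\|_{L^p(\mu)}$ with $K=[w]_{A_r}^{1/r}[w]_{A_s}^{1/p}$. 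The near part $\delta Mg$ costs $c_n[w]_{A_p}^{1/p}$ in $L^{p,\infty}(\mu)$. Optimizing in $\delta$ gives the $L^{q,\infty}(\mu)$ constant $\bigl([w]_{A_p}^{1/p}\bigr)^{1-\frac{p}{ns}}K^{\frac{p}{ns}}$. For your $K$ this only yields a bound of order $[w]_{A_r}^{\frac1p+\frac{p}{nrs}}$. So your claim that ``the $A_r$ factor enters once inside a $p$-th root'' is not what your scheme produces. The same loss appears in Step~2 if $w^{-1}\le [w]_{A_1}/w_B$ is used ball by ball.

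The missing idea is to use the Muckenhoupt condition only once, on $Q$. By H\"older and $A_s$, $\bigl(\frac{1}{|Q|}\int_Q g^{p/s}\bigr)^{s/p}\le [w]_{A_s}^{1/p}\|g\|_{L^p(\mu)}$. Then estimate the far part by unweighted H\"older in $L^{p/s}(Q)$, which is legitimate since $1\le p/s<n$. This gives $K=c\,[w]_{A_s}^{1/p}$ and a weak-type constant $[w]_{A_p}^{\frac1p-\frac1{ns}}[w]_{A_s}^{\frac{1}{ns}}\le [w]_{A_s}^{1/p}$. No comparison of $w(B)$ with $w(Q)$ is needed.

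\textbf{The reverse H\"older inequality.} Theorem~\ref{thm:RHIflat} is not the right inequality for this statement, because its constant is $2[\sigma]_{A_\infty}$. With the admissible exponent $1+\frac{1}{\tau_n[\sigma]_{A_\infty}-1}$ it gives the correct $s$, but only $[w]_{A_s}\le (2[\sigma]_{A_\infty})^{s-1}[w]_{A_r}$. That puts a power of $[\sigma]_{A_\infty}$ into $C_w$. This is precisely how the paper arrives at the different constant of Theorem~\ref{thm:PoincareSobolev-SharpFlat1}; see also the remark comparing Corollary~\ref{cor:Ap-leftopen} with \cite{HPR1}. To reach $C_w=c_np^*[w]_{A_r}^{1/p}$ you need the openness of \cite{HPR1}, which rests on a reverse H\"older inequality for $\sigma$ with constant $2$ and yields $[w]_{A_s}\le 2^{r-1}[w]_{A_r}$.

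\textbf{Remaining points.} Your truncation step needs an extra argument, because the left-hand side uses the unweighted mean $f_Q$ while the measure is weighted. The paper handles this with \cite[Proposition A.1]{LoristWagenaar} together with the unweighted $(1,1)$ Poincar\'e bound transferred through the $A_p$ condition. Your Step~4 sharpness claim is a suggestion rather than an argument.
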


We present here an improvement on this result that is particularly relevant for flat weights.

\begin{theorem}\label{thm:PoincareSobolev-SharpFlat1}
Under the same hypotheses as in Theorem \ref{thm:Poincare-Sobolev-Claros}, and assuming that $r>1$, inequality \eqref{eq:weighted PS} can be proved with exponent $q$ defined by
	\begin{equation}\label{eq:PoincareSobolev-SharpFlat1}
		\frac{1}{p}-\frac{1}{q}= \frac{1}{n}\frac{1+\tau_n ([\sigma]_{A_\infty}-1)}{1+r\tau_n ([\sigma]_{A_\infty}-1)},
	\end{equation}
and constant $C_w=c_n p^* [w]_{A_r}^{\frac{1}{p}} [\sigma]_{A_\infty}^{\frac{r-1}{p}}$.
\end{theorem}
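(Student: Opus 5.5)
The plan is to repeat the argument of \cite{Claros25}, but to replace the openness property of $A_r$ used there (the one from \cite{HPR1}) with the sharper left-openness of Corollary \ref{cor:Ap-leftopen}. Looking at the exponent in Theorem \ref{thm:Poincare-Sobolev-Claros}, I expect the core estimate behind \cite[Theorem 2.4]{Claros25} to be the following, which I take to be the content of \cite[Proposition 4.4]{Claros25} stated for cubes. If $w\in A_s$ with $1\le s\le p<n$, then \eqref{eq:weighted PS} holds with $\frac1p-\frac1q=\frac{1}{ns}$ and constant $c_n p^*[w]_{A_s}^{1/p}$. This estimate comes from the Riesz potential bound combined with the truncation method.

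As a consistency check, take the openness exponent $\varepsilon=\frac{r-1}{\tau_n[\sigma]_{A_\infty}}$ from \cite{HPR1}. It gives $s=r-\varepsilon$ with $\frac1s=\frac{\tau_n[\sigma]_{A_\infty}}{1+r(\tau_n[\sigma]_{A_\infty}-1)}$, which is exactly the exponent in Theorem \ref{thm:Poincare-Sobolev-Claros}. So the only change needed is in the choice of $s$.

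Step 1. Apply Corollary \ref{cor:Ap-leftopen} with $p$ replaced by $r>1$ and $\sigma=w^{1-r'}$. Writing $\delta=\tau_n([\sigma]_{A_\infty}-1)$ with $\tau_n=2^{n+1}$, we get $\varepsilon=\frac{r-1}{1+\delta}$. Hence $w\in A_s$ with
\begin{equation*}
s=r-\frac{r-1}{1+\delta}=\frac{1+r\delta}{1+\delta},\qquad [w]_{A_s}\le (2[\sigma]_{A_\infty})^{r-1}[w]_{A_r}.
\end{equation*}
Note that $1\le s\le r\le p$, so $s$ is an admissible index.

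Step 2. Feed $w\in A_s$ into the core estimate from the first paragraph. This yields $\frac1p-\frac1q=\frac1{ns}=\frac1n\frac{1+\delta}{1+r\delta}$, which is \eqref{eq:PoincareSobolev-SharpFlat1}, with constant $c_np^*[w]_{A_s}^{1/p}$.

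Step 3. Bound the constant. We have $[w]_{A_s}^{1/p}\le 2^{(r-1)/p}[\sigma]_{A_\infty}^{(r-1)/p}[w]_{A_r}^{1/p}$. Since $r\le p$, the factor $2^{(r-1)/p}\le 2$ is absorbed into $c_n$. This gives $C_w=c_np^*[w]_{A_r}^{1/p}[\sigma]_{A_\infty}^{(r-1)/p}$.

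The main obstacle is Step 2. One has to check that the proof in \cite{Claros25} really runs for an arbitrary $A_s$ weight with $s\le p$: it must output the exponent $\frac1{ns}$ and a constant depending only on $[w]_{A_s}^{1/p}$, and not on the specific form of the openness exponent. Concretely, I would re-read the proof and verify three things. First, the fractional/Riesz potential step uses only a bound of the form $\left(\frac{|E|}{|Q|}\right)^{s}\le [w]_{A_s}\frac{w(E)}{w(Q)}$. Second, the truncation argument yields the weak-to-strong upgrade with a constant independent of $s$. Third, $s=1$ is covered, which happens when $\delta=0$, i.e. $\sigma$ constant; this case also follows directly, since then $s=1$ and $q=p^*$.
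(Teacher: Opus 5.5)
Your proposal is correct and matches the paper's proof step by step. Corollary~\ref{cor:Ap-leftopen} gives $w\in A_s$ with $s=\frac{1+r\delta}{1+\delta}$ and $[w]_{A_s}\le(2[\sigma]_{A_\infty})^{r-1}[w]_{A_r}$; the subrepresentation formula together with Proposition~\ref{prop:weak Riesz} (with $\alpha=1$ and $s$ in place of $r$) gives the $L^{q,\infty}$ bound with $\frac1p-\frac1q=\frac1{ns}$; truncation then gives the strong bound. The one ingredient your Step 2 leaves implicit is that the weak-to-strong principle of \cite[Proposition A.1]{LoristWagenaar} also needs the unweighted $L^1$ Poincar\'e inequality with the same right-hand side, which the paper gets from the classical $(1,1)$ Poincar\'e inequality and $[w]_{A_p}\le[w]_{A_s}$.
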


To prove the previous result, we first recall the following result from \cite[Proposition 4.4]{Claros25}, which is motivated by \cite{ACS09} and will play a key role in the proof. Recall that, for $0<\alpha<n$, the Riesz potential is defined by
	\begin{equation*}
		I_\alpha f(x):=\int_{\R^n}\frac{f(y)}{|x-y|^{n-\alpha}}\,dy.
	\end{equation*}

\begin{proposition}[\cite{Claros25}]\label{prop:weak Riesz}
	Let $n\ge 1$, $\alpha\in (0,n)$ and $1\le p <\frac{n}{\alpha}$. Let $w\in A_p$ and consider $q_r$ defined by the relation 
	\begin{equation*}
		\frac{1}{p}-\frac{1}{q_r}=\frac{\alpha}{n} \frac{1}{r},
	\end{equation*}
	with $1\le r\le p$. If $w\in A_r$, then there exists a dimensional constant $c_n>0$ such that 
	\begin{equation*}
		\left\| I_\alpha f \right\|_{L^{q_r, \infty}\left(Q, \frac{w(x)dx}{w(Q)}\right)} \le \frac{c_n}{\alpha} p^*_\alpha [w]_{A_r}^\frac{1}{p} \ell(Q)^\alpha \left( \frac{1}{w(Q)}\int_Q |f(x)|^p w(x)\, dx\right)^\frac{1}{p},
	\end{equation*}
	for any cube $Q$ and every $f\in L^p(Q, w)$, where $p^*_\alpha = \frac{np}{n-\alpha p}$. 
\end{proposition}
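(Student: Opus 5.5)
The statement to prove is Proposition \ref{prop:weak Riesz}: a weak-type $(p,q_r)$ estimate for $I_\alpha$ localized to $Q$, with constant $\frac{c_n}{\alpha}p^*_\alpha[w]_{A_r}^{1/p}\ell(Q)^\alpha$. I plan to prove it with a Hedberg-type pointwise splitting, using the $A_r$ condition to turn weighted $L^p$ norms into Lebesgue averages on subcubes.

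\textbf{Normalization and splitting.} Assume $f$ is supported in $Q$, and fix $x\in Q$ and a radius $0<\delta\le \ell(Q)$. Split
\begin{equation*}
I_\alpha f(x)=\int_{|x-y|<\delta}\frac{f(y)}{|x-y|^{n-\alpha}}dy+\int_{|x-y|\ge\delta}\frac{f(y)}{|x-y|^{n-\alpha}}dy .
\end{equation*}
A dyadic annuli decomposition bounds the local part by $c_n\alpha^{-1}\delta^\alpha Mf(x)$.

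\textbf{Bounding the far part.} For the far part, decompose into annuli $2^k\delta\le|x-y|<2^{k+1}\delta$, up to scale $\sqrt n\,\ell(Q)$. On a cube $P\subset Q$ of side comparable to $2^k\delta$, Hölder together with the $A_r$ condition (hence $A_p$, since $r\le p$) gives
\begin{equation*}
\frac{1}{|P|}\int_P|f|\le [w]_{A_r}^{1/p}\Big(\frac{1}{w(P)}\int_P|f|^pw\Big)^{1/p}.
\end{equation*}
The $A_r$ condition also yields $w(P)/w(Q)\ge [w]_{A_r}^{-1}(|P|/|Q|)^r$. Combining the two,
\begin{equation*}
\frac{1}{|P|}\int_P|f|\le [w]_{A_r}^{1/p}\,[w]_{A_r}^{1/p}\Big(\frac{|Q|}{|P|}\Big)^{r/p}\|f\|,
\end{equation*}
where $\|f\|$ denotes the normalized weighted $L^p$ norm. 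This has an extra factor $[w]_{A_r}^{1/p}$, which I must avoid. To do so, I will apply the $A_r$ condition only once, in the combined form
\begin{equation*}
\Big(\frac{|P|}{|Q|}\Big)^r\frac{1}{|P|}\int_P|f|\cdots
\end{equation*}
Concretely, Hölder with exponents $p$ and $p'$ on $P$ gives $\int_P|f|\le (\int_P|f|^pw)^{1/p}\sigma_p(P)^{1/p'}$, and the $A_r$ condition gives $\sigma_p(P)$ in terms of $|P|$ and $w(P)$. Estimating $w(P)\le w(Q)$ in the right direction should leave a single factor $[w]_{A_r}^{1/p}$ times $(|Q|/|P|)^{r/p}$. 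Summing $2^{k\alpha}\delta^\alpha\,(|Q|/(2^k\delta)^n)^{r/p}$ is a geometric series, convergent since $\alpha<nr/p$ (because $\alpha p<n$ and $r\ge1$). Its size is dominated by the smallest scale, giving about $\frac{c}{nr/p-\alpha}\,\delta^{\alpha-nr/p}\ell(Q)^{nr/p}[w]_{A_r}^{1/p}\|f\|$. The factor $1/(n/p-\alpha)$ is what produces $p^*_\alpha/p$-type constants.

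\textbf{Optimization and weak type.} Choose $\delta$ to balance the two terms. With $\beta=nr/p$,
\begin{equation*}
|I_\alpha f(x)|\lesssim \Big(\ell(Q)^\alpha\tfrac{Mf(x)}{A}\Big)^{1-\alpha/\beta}\,\ell(Q)^\alpha A,
\end{equation*}
where $A=[w]_{A_r}^{1/p}\|f\|$. The level set $\{|I_\alpha f|>t\}$ is then contained in a level set of $Mf$ at height $\sim t^{\beta/(\beta-\alpha)}$. The Muckenhoupt weak $(p,p)$ bound, $w(\{Mf>s\})\le [w]_{A_p}s^{-p}\int|f|^pw$, then gives decay $t^{-p\beta/(\beta-\alpha)}=t^{-q_r}$, since $1/q_r=1/p-\alpha/(nr)$. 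This is the required weak $L^{q_r}$ estimate.

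\textbf{Main obstacle.} The weak $(p,p)$ bound costs $[w]_{A_p}^{1/p}$, which compounds with the $[w]_{A_r}^{1/p}$ already in $A$. I would avoid this by using the localized weak bound $w(\{M(f\chi_Q)>s\}\cap Q)\le [w]_{A_r}\,s^{-p}\ldots$ only through the fractional-power interpolation, so that $[w]_{A_r}$ appears to total power $1/p$ once exponents are tracked. If that fails, I would instead use a truncation/good-$\lambda$ argument directly at the Riesz-potential level. Keeping the sharp single $[w]_{A_r}^{1/p}$ is the delicate step.
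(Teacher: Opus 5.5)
The paper gives no proof of this proposition; it is quoted from the first author's earlier work, so your argument has to stand on its own. The overall plan is sound: a Hedberg splitting, a far-part bound carrying a single $A_r$ factor, then the weak $(p,p)$ bound for $M$. But the step that decides the power of $[w]_{A_r}$ fails as you describe it.

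If you apply the $A_r$ condition on $P$ itself, you get $\big(\frac{1}{|P|}\int_P w^{1-p'}\big)^{p-1}\le [w]_{A_r}|P|/w(P)$. So $w(P)$ sits in a denominator and ``$w(P)\le w(Q)$'' points the wrong way. Bounding $w(P)$ from below by $w(Q)$ is again an $A_r$ estimate, and it brings back exactly the factor $[w]_{A_r}^{2/p}$ you wanted to avoid.

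The missing idea is to enlarge $P$ \emph{before} invoking $A_r$. Take a cube $\tilde Q\supset P\cup Q$ with $\ell(\tilde Q)\simeq_n\ell(Q)$; your cubes at scales up to $\sqrt n\,\ell(Q)$ are not inside $Q$ anyway, so work with $f\chi_Q$. Since $r\le p$, compare power means, then use $P\subset\tilde Q$ and the $A_r$ condition on $\tilde Q$ (with the obvious $L^\infty$ version when $r=1$):
\begin{align*}
\Big(\frac{1}{|P|}\int_P w^{1-p'}\Big)^{p-1}
&\le\Big(\frac{1}{|P|}\int_P w^{1-r'}\Big)^{r-1}
\le\Big(\frac{|\tilde Q|}{|P|}\Big)^{r-1}\frac{[w]_{A_r}\,|\tilde Q|}{w(\tilde Q)}\\
&\le\Big(\frac{|\tilde Q|}{|P|}\Big)^{r-1}\frac{[w]_{A_r}\,|\tilde Q|}{w(Q)}.
\end{align*}
Together with $\int_P|f\chi_Q|^pw\le w(Q)\|f\|^p$ and H\"older, this gives $\frac{1}{|P|}\int_P|f\chi_Q|\le[w]_{A_r}^{1/p}(|\tilde Q|/|P|)^{r/p}\|f\|$, with a single factor.

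With that bound your ``main obstacle'' disappears, and no truncation or good-$\lambda$ argument is needed. Write $\beta=nr/p$ and $\theta=\alpha/\beta\in(0,1)$. The optimized pointwise bound is valid for every $\delta>0$, since the far part vanishes for $\delta>\sqrt n\,\ell(Q)$. It reads $|I_\alpha(f\chi_Q)|\le K\,M(f\chi_Q)^{1-\theta}$ with $K\lesssim_n \frac{p^*_\alpha}{\alpha}[w]_{A_r}^{\theta/p}\|f\|^{\theta}\ell(Q)^{\alpha}$. Here you use $\frac{1}{\beta-\alpha}\le\frac{p}{n-\alpha p}=\frac{p^*_\alpha}{n}$ (not $p^*_\alpha/p$) and $(1/\alpha)^{1-\theta}(p^*_\alpha)^{\theta}\le c_np^*_\alpha/\alpha$.

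Now feed this into $w(\{Mg>s\})\le c_n[w]_{A_p}s^{-p}\int|g|^pw$ and use $p/(1-\theta)=q_r$:
\begin{itemize}
\item the weak-type constant of $M$ enters the $L^{q_r,\infty}$ quasi-norm only as $[w]_{A_p}^{1/q_r}$;
\item $K$ contributes $[w]_{A_r}^{\theta/p}=[w]_{A_r}^{\alpha/(nr)}$.
\end{itemize}
Since $\frac{1}{q_r}+\frac{\alpha}{nr}=\frac{1}{p}$ and $[w]_{A_p}\le[w]_{A_r}$, the product is at most $[w]_{A_r}^{1/p}$. The powers of $\|f\|$ combine to $\|f\|^{(1-\theta)+\theta}=\|f\|$. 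With these two corrections your proof goes through with the stated constant.
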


\begin{proof}[Proof of Theorem \ref{thm:PoincareSobolev-SharpFlat1}]
	By Corollary \ref{cor:Ap-leftopen}, we have $w\in A_s$ with $[w]_{A_s}\le (2[\sigma]_{A_\infty})^{r-1}[w]_{A_r}$, where $s<r$ is given by
	\begin{equation*}
		s=r-\frac{r-1}{1+\tau_n([\sigma]_{A_\infty}-1)}=\frac{1+r\tau_n([\sigma]_{A_\infty}-1)}{1+\tau_n([\sigma]_{A_\infty}-1)}.
	\end{equation*}
	Moreover,
	\begin{equation*}
		\frac{1}{p}-\frac{1}{q}=\frac{1}{ns}=\frac{1}{n}\frac{1+\tau_n([\sigma]_{A_\infty}-1)}{1+r\tau_n([\sigma]_{A_\infty}-1)},
	\end{equation*}
	which gives the exponent in \eqref{eq:PoincareSobolev-SharpFlat1}. Fix a cube $Q$. Since $1\le s<r\le p$, using the subrepresentation formula $|f(x)-f_Q|\le c_n I_1(|\nabla f|\chi_Q)(x)$ for almost every $x\in Q$  (see for instance \cite[Theorem 15.16]{WZ}) and Proposition \ref{prop:weak Riesz} with $\alpha=1$, we obtain
	\begin{align*}
		\left\| f-f_Q\right\|_{L^{q, \infty}\left(Q, \frac{w(x)dx}{w(Q)}\right)}\le & c_n \left\| I_1(|\nabla f|\chi_Q) \right\|_{L^{q, \infty}\left(Q, \frac{w(x)dx}{w(Q)}\right)}\\
		\le &  c_n p^* [w]_{A_s}^\frac{1}{p} \ell(Q) \left( \frac{1}{w(Q)}\int_Q |\nabla f(x)|^p w(x)\, dx\right)^\frac{1}{p}\\
		\le &  c_n p^* [w]_{A_r}^{\frac{1}{p}} [\sigma]_{A_\infty}^{\frac{r-1}{p}} \ell(Q) \left( \frac{1}{w(Q)}\int_Q |\nabla f(x)|^p w(x)\, dx\right)^\frac{1}{p}.
	\end{align*}
	
	Finally, we conclude the proof by applying the well-known truncation argument (see \cite{Ha, KO03}). As is implicit in \cite{FPW98}, this method can be applied to the local inequality while maintaining the unweighted average $f_Q$ on the left-hand side; see also \cite[Proposition A.1]{LoristWagenaar} for an explicit proof. More precisely, the weak-type estimate above can be upgraded to the desired strong-type inequality provided that the corresponding unweighted $L^1$ Poincar\'e inequality holds with the same right-hand side. Thus, it only remains to verify this auxiliary estimate. The classical $(1,1)$ Poincar\'e inequality and the $A_p$ condition yield
	\begin{align*}
		\frac{1}{|Q|}\int_Q |f(x)-f_Q|\, dx \le & c_n \ell(Q) \frac{1}{|Q|}\int_Q |\nabla f (x)|\, dx \\
		\le & c_n [w]_{A_p}^\frac{1}{p} \ell(Q) \left( \frac{1}{w(Q)}\int_Q |\nabla f(x)|^p w(x) \, dx \right)^\frac{1}{p}\\
		\le & c_n [w]_{A_r}^{\frac{1}{p}} [\sigma]_{A_\infty}^{\frac{r-1}{p}} \ell(Q) \left( \frac{1}{w(Q)}\int_Q |\nabla f(x)|^p w(x)\, dx\right)^\frac{1}{p},
	\end{align*}
	where we have used that $s\le p$ and hence $[w]_{A_p}\le [w]_{A_s}\le (2[\sigma]_{A_\infty})^{r-1}[w]_{A_r}$. 	This proves the required auxiliary estimate, and hence the result follows from \cite[Proposition A.1]{LoristWagenaar}.
\end{proof}
 
We remark here that, although the constant becomes larger and is no longer sharp, the resulting Sobolev exponent is correct for flat weights; that is, it exhibits the correct asymptotic behavior as $[\sigma]_{A_\infty}\to 1$. Note that the result above is valid for every $A_r$ weight with $r>1$, without any smallness assumption on $[w]_{A_\infty}$ or $[\sigma]_{A_\infty}$, although it is particularly relevant for flat weights. However, the result also depends on the $A_r$ class: the choice of $r$ for which $w\in A_r$ enters quantitatively both in the exponent $q$ in \eqref{eq:PoincareSobolev-SharpFlat1} and in the constant in the inequality.

Now we provide the proof of the main result on Poincar\'e--Sobolev inequalities presented in Theorem \ref{thm:PoincareSobolev-SharpFlat2}. We will apply our embedding result (Theorem \ref{thm:Ainfty Ap}) to prove weighted Poincar\'e--Sobolev inequalities for $A_\infty$ weights with small $A_\infty$ constant.

\begin{proof}[Proof of Theorem \ref{thm:PoincareSobolev-SharpFlat2}]
	Let $c_n$ be the dimensional constant from Theorem \ref{thm:Ainfty Ap}, and set $c_{n,p} := \min\{ c_n, (p-1)\tau_n^{-1}\}$. Let $w\in A_\infty$ such that $[w]_{A_\infty}\le 1+c_{n,p}$. Since $c_{n,p}\le c_n$, Theorem \ref{thm:Ainfty Ap} gives $w\in A_r$ with $r=1+\tau_n([w]_{A_\infty}-1)$ and $[w]_{A_r}\lesssim_n 1+\tau_n([w]_{A_\infty}-1)$. Moreover, we have $r \le 1+\tau_n c_{n,p} \le p$. Thus $w\in A_r\subset A_p$ with $1\le r\le p$, and we can apply Proposition \ref{prop:weak Riesz} with $\alpha=1$, obtaining $q_r=p^*_w$. For each cube $Q$, using the subrepresentation formula $|f(x)-f_Q|\le C_n I_1(|\nabla f |\chi_Q)(x)$ for almost every $x\in Q$, we have
	\begin{align*}
		\left\| f-f_Q\right\|_{L^{p^*_w, \infty}\left(Q, \frac{w(x)dx}{w(Q)}\right)}\le & c_n \left\| I_1 (|\nabla f|\chi_Q)  \right\|_{L^{q_r, \infty}\left(Q, \frac{w(x)dx}{w(Q)}\right)}\\
		\le & C_n p^* [w]_{A_r}^\frac{1}{p} \ell(Q) \left( \frac{1}{w(Q)}\int_Q |\nabla f|^p w\right)^\frac{1}{p}	\\
		\le & C_n p^* (1+\tau_n ([w]_{A_\infty}-1))^\frac{1}{p} \ell(Q) \left( \frac{1}{w(Q)}\int_Q |\nabla f|^p w\right)^\frac{1}{p}.	
	\end{align*}
	Finally, we conclude the proof by applying the truncation method from \cite[Theorem 2.3]{CPZ}, which is a Lorentz-space version of the weak-to-strong principle in \cite[Proposition A.1]{LoristWagenaar} and follows the ideas in \cite{FPW98}. The additional unweighted $L^1$ estimate required in this result is obtained exactly as in the proof of Theorem \ref{thm:PoincareSobolev-SharpFlat1}, using the classical $(1,1)$ Poincar\'e inequality and the $A_p$ condition. We thus obtain the claimed $L^{p^*_w,p}$ estimate, and the strong $L^{p^*_w}$ estimate follows from the inclusion $L^{p^*_w,p}\hookrightarrow L^{p^*_w}$, since $p<p^*_w$.
\end{proof}

\bibliographystyle{amsalpha}
\providecommand{\bysame}{\leavevmode\hbox to3em{\hrulefill}\thinspace}
\providecommand{\MR}{\relax\ifhmode\unskip\space\fi MR }
\providecommand{\MRhref}[2]{%
  \href{http://www.ams.org/mathscinet-getitem?mr=#1}{#2}
}
\providecommand{\href}[2]{#2}


\end{document}